\title[Duality in FEEC and Hodge Duality on the Sphere]{Duality in Finite Element Exterior Calculus and Hodge Duality on the Sphere}
\author{Yakov Berchenko-Kogan}
\subjclass[2010]{65N30, 58A10}
\keywords{finite element exterior calculus, Hodge duality, differential forms, finite element method}
\thanks{Communicated by Douglas Arnold}
\newtheorem{theorem}{Theorem}[section]
\newtheorem{proposition}[theorem]{Proposition}
\newtheorem{corollary}[theorem]{Corollary}
\newtheorem{lemma}[theorem]{Lemma}
\theoremstyle{definition}
\newtheorem{definition}[theorem]{Definition}
\newtheorem{notation}[theorem]{Notation}
\newtheorem{example}[theorem]{Example}
\newcommand{\abs}[1]{\left\lvert{#1}\right\rvert}
\renewcommand{\P}{\mathcal P}
\newcommand{\oP}{\mathring{\mathcal P}}
\newcommand\ringring[1]{%
  {
   \mathop{\kern0pt #1}\limits^{
     \vbox to-1.85ex{
       \kern-2ex 
       \hbox to 0pt{\hss\normalfont\kern.1em \r{}\kern-.45em \r{}\hss}%
       \vss 
     }
   }
  }
}
\newcommand{\ooP}{\ringring{\mathcal P}}
\newcommand{\R}{\mathbb R}
\newcommand\T{\mathcal T}
\newcommand{\vol}{\mathrm{vol}}
\newcommand{\pp}[2][]{\frac{\partial{#1}}{\partial{#2}}}
\begin{document}

\begin{abstract}
  Finite element exterior calculus refers to the development of finite element methods for differential forms, generalizing several earlier finite element spaces of scalar fields and vector fields to arbitrary dimension $n$, arbitrary polynomial degree $r$, and arbitrary differential form degree $k$. The study of finite element exterior calculus began with the $\mathcal P_r\Lambda^k$ and $\mathcal P_r^-\Lambda^k$ families of finite element spaces on simplicial triangulations. In their development of these spaces, Arnold, Falk, and Winther rely on a duality relationship between $\mathcal P_r\Lambda^k$ and $\mathring{\mathcal P}_{r+k+1}^-\Lambda^{n-k}$ and between $\mathcal P_r^-\Lambda^k$ and $\mathring{\mathcal P}_{r+k}\Lambda^{n-k}$. In this article, we show that this duality relationship is, in essence, Hodge duality of differential forms on the standard $n$-sphere, disguised by a change of coordinates. We remove the disguise, giving explicit correspondences between the $\mathcal P_r\Lambda^k$, $\mathcal P_r^-\Lambda^k$, $\mathring{\mathcal P}_r\Lambda^k$ and $\mathring{\mathcal P}_r^-\Lambda^k$ spaces and spaces of differential forms on the sphere. As a direct corollary, we obtain new pointwise duality isomorphisms between $\mathcal P_r\Lambda^k$ and $\mathring{\mathcal P}_{r+k+1}^-\Lambda^{n-k}$ and between $\mathcal P_r^-\Lambda^k$ and $\mathring{\mathcal P}_{r+k}\Lambda^{n-k}$. These isomorphisms can be implemented via a simple computation, which we illustrate with examples.
\end{abstract}

\maketitle

\section{Introduction}
The finite element method is a tool for solving partial differential equations numerically that approximates solutions to the PDE by functions that are piecewise polynomial with respect to a mesh. In the 1970s and 1980s, Raviart, Thomas, Brezzi, Douglas, Marini, and N\'ed\'elec extended these methods to vector equations, such as Maxwell's equations of electromagnetism \cite{bdm85,n80,n86,rt77}. In the early 2000s, Arnold, Falk, and Winther developed finite element exterior calculus, placing scalar and vector finite element methods under a unifying umbrella of finite element methods for differential forms \cite{afw06}.

Given a simplicial triangulation $\T$, Arnold, Falk, and Winther \cite{afw06} constructed two families of spaces of piecewise polynomial differential forms on $\T$, namely the $\P_r\Lambda^k(\T)$ and $\P_r^-\Lambda^k(\T)$ spaces, where $r$ is the polynomial degree and $k$ is the differential form degree. This task amounts to finding a way to uniquely specify a differential form $b\rvert_T$ on each simplex $T$ of the triangulation $\T$, while at the same time ensuring interelement continuity, in the sense that if $T_1$ and $T_2$ share a face $f$, then the restrictions of $b\rvert_{T_1}$ and $b\rvert_{T_2}$ to $f$ agree. Arnold, Falk, and Winther accomplish this task by specifying $b$ using the values of $\int_fa\wedge b$ for each face $f$ of the triangulation $\T$, where $a$ comes from an appropriately chosen space of polynomial differential forms on $f$. Specifically, for $b\in\P_r\Lambda^k(\T)$, they take $a\in\P^-_{r+k-\dim f}\Lambda^{\dim f-k}(f)$, and for $b\in\P_r^-\Lambda^k(\T)$, they take $a\in\P_{r+k-\dim f-1}\Lambda^{\dim f-k}(f)$. With these choices, given arbitrary desired values for $\int_fa\wedge b$ for all $f$ and $a$, there is a unique differential form $b$ on $\T$ with those values satisfying the interelement continuity condition above. See also \cite[Theorem 3.5]{a13} for a short proof of this result. 

As discussed in their later paper \cite[Theorem 4.3]{afw09}, one can reduce the above task of finding appropriate spaces for $a$ to a much simpler task. Namely, given a single simplex $T^n$ of arbitrary dimension $n$, we consider the spaces $\oP_r\Lambda^k(T^n)$ and $\oP_r^-\Lambda^k(T^n)$ of polynomial differential forms on $T^n$ whose restriction to the boundary $\partial T^n$ vanishes. For $b$ coming from one of those spaces, the task is to find an appropriate space for $a$ such that, given desired values of $\int_{T^n}a\wedge b$ for all $a$, there is a unique $b$ with those values. To accomplish this task, for $b\in\oP_r\Lambda^k(T^n)$, we can take $a\in\P_{r+k-n}^-\Lambda^{n-k}(T^n)$, and for $b\in\oP_r^-\Lambda^k(T^n)$, we can take $a\in\P_{r+k-n-1}\Lambda^{n-k}(T^n)$. As the authors discuss in \cite{afw09}, the choices of spaces for $a$ in \cite{afw06} follow by simply setting $f=T^n$.

As discussed in \cite{afw06}, we should think of the above claim as a duality result. For each $b\in\oP_r\Lambda^k(T^n)$, we have a linear functional $a\mapsto\int_{T^n}a\wedge b$. Hence, each $b\in\oP_r\Lambda^k(T^n)$ determines an element of the dual space $\P_{r+k-n}^-\Lambda^{n-k}(T^n)^*$. Thus, we have a map $\oP_r\Lambda^k(T^n)\to\P_{r+k-n}^-\Lambda^{n-k}(T^n)^*$, and likewise we have a map $\oP_r^-\Lambda^k(T^n)\to\P_{r+k-n-1}\Lambda^{n-k}(T^n)^*$. With this perspective, the statement that we can find a unique $b$ that will give desired values for $\int_{T^n}a\wedge b$ for all $a$ is simply the statement that these maps are isomorphisms.

In this paper, it will be convenient to consider the equivalent isomorphisms $\P_r\Lambda^k(T^n)\to\oP_{r+k+1}^-\Lambda^{n-k}(T^n)^*$ and $\P_r^-\Lambda^k(T^n)\to\oP_{r+k}\Lambda^{n-k}(T^n)^*$, which we can get from the ones above by taking the dual map and reindexing $r$ and $k$. Additionally, rather that claiming that these maps are isomorphisms, we can also take the equivalent perspective of claiming that the bilinear paring $(a,b)\mapsto\int_{T^n}a\wedge b$ is nondegenerate. That is, given a nonzero $a\in\P_r\Lambda^k(T^n)$, there exists a form $b\in\oP_{r+k+1}^-\Lambda^{n-k}(T^n)$ such that $\int_{T^n}a\wedge b>0$, and, conversely, for every $b$ there exists such an $a$, and likewise for the other duality relationship.

\subsection{Main results}
In this article, we show that, with a change of coordinates, we can reveal the duality relationships between $\P_r\Lambda^k(T^n)$ and $\oP_{r+k+1}^-\Lambda^{n-k}(T^n)$ and between $\P_r^-\Lambda^k(T^n)$ and $\oP_{r+k}\Lambda^{n-k}(T^n)$ to be Hodge duality on the standard $n$-sphere combined with multiplication by the bubble function $u_N:=u_1\dotsm u_{n+1}$ defined in Notation \ref{notation:ui}. Specifically, using the change of coordinates, the Hodge star on the sphere $*_{S^n}$, and the bubble function $u_N$, we define an injective tensorial map from $k$-forms $\Lambda^k(T^n)$ to $(n-k)$-forms $\Lambda^{n-k}(T^n)$. We show that, under this map, the image of $\P_r\Lambda^k(T^n)$ is $\oP_{r+k+1}^-\Lambda^{n-k}(T^n)$, and the image of $\P_r^-\Lambda^k(T^n)$ is $\oP_{r+k}\Lambda^{n-k}(T^n)$, giving us natural isomorphisms between these pairs of spaces. This result appears as Corollary \ref{cor:isos}. We remark here that defining a natural map from $\P_r\Lambda^k(T^n)$ to $\oP_{r+k+1}^-\Lambda^{n-k}(T^n)$ is very different from defining a natural map from $\P_r\Lambda^k(T^n)$ to the dual space $\oP_{r+k+1}^-\Lambda^{n-k}(T^n)^*$. As discussed above, the latter map follows immediately from the paring $\int_{T^n}a\wedge b$.

Our map also allows us to strengthen the above statement of nondegeneracy of the pairing $(a,b)\mapsto\int_{T^n}a\wedge b$. Given a nonzero $a\in\P_r\Lambda^k(T^n)$, rather than simply saying that there exists a form $b\in\oP_{r+k+1}^-\Lambda^{n-k}(T^n)$ such that $\int_{T^n}a\wedge b>0$, our isomorphism map explicitly constructs this form $b$. Moreover, the tensoriality of this map means that $b$ only depends on $a$ pointwise, in the sense that the value of $b$ at $x\in T^n$ only depends on the value of $a$ at $x$, not on its values at other points of $T^n$. We have an analogous result for the other pairing, and both of these results appear as Corollary \ref{cor:prduality}.

Computing our map is simple and straightforward to implement, as we illustrate in Examples \ref{eg:prduality} and \ref{eg:prmduality}. We also give Examples \ref{eg:scalarfieldsn} and \ref{eg:scalarfields0}, where we specialize to the classical case of scalar fields, computing our isomorphism in the cases $k=n$ and $k=0$.

The key ingredient is the transformation $x_i=u_i^2$, which sends the unit $n$-sphere $u_1^2+\dotsb+u_{n+1}^2=1$ to the standard $n$-simplex $x_1+\dotsb+x_{n+1}=1$. This transformation induces a correspondence between differential forms on the simplex $T^n$ and differential forms on the sphere $S^n$. In Theorems \ref{thm:TSiso0} and \ref{thm:TSiso}, we determine the spaces of differential forms on the sphere that correspond to the $\P_r\Lambda^k(T^n)$, $\P_r^-\Lambda^k(T^n)$, $\oP_r\Lambda^k(T^n)$, and $\oP_r^-\Lambda^k(T^n)$ spaces of differential forms on the simplex. The previously discussed Corollaries \ref{cor:isos} and \ref{cor:prduality} quickly follow from this characterization. We note that these results are equally valid for an arbitrary simplex by mapping it to the standard simplex using barycentric coordinates.

The isomorphism maps $\P_r\Lambda^k(T^n)\xrightarrow\simeq\oP_{r+k+1}^-\Lambda^{n-k}(T^n)$ and $\P_r^-\Lambda^k(T^n)\xrightarrow\simeq\oP_{r+k}\Lambda^{n-k}(T^n)$ given in this paper are the same maps as the ones given in my earlier preprint \cite{bk18feec}, and to the best of my knowledge are the only such maps in the literature that are defined pointwise. This article can be viewed as providing a new interpretation of these maps in terms of Hodge duality on the $n$-sphere. For a different construction of isomorphism maps between these spaces, see Martin Licht's recent work \cite{l18}. The isomorphisms constructed in \cite[Equations (34) and (35)]{l18} involve decomposing differential forms in $\P_r\Lambda^k(T^n)$ and $\P_r^-\Lambda^k(T^n)$ as linear combinations in terms of canonical spanning sets defined in that paper. In contrast, the isomorphism maps in this article can be written directly in terms of the Hodge star, without needing to decompose the differential forms as linear combinations.

\subsection{Outline of this paper}
We discuss our notation and definitions in Section \ref{sec:preliminaries}. In particular, we use a new definition of $\P_r^-\Lambda^k(T^n)$. We show in Appendix \ref{sec:prmdef} that our definition is equivalent to the definition given by Arnold, Falk, and Winther. We also define several spaces of differential forms on the sphere.

In Section \ref{sec:results}, we present the main results discussed above, along with examples that illustrate them. Specifically, in Theorem \ref{thm:TSiso0}, we give the spaces of differential forms on the sphere that correspond to the $\P_r\Lambda^k(T^n)$, $\P_r^-\Lambda^k(T^n)$, $\oP_r\Lambda^k(T^n)$, and $\oP_r^-\Lambda^k(T^n)$ spaces of differential forms on the simplex via the coordinate transformation $x_i=u_i^2$. Then, in Theorem \ref{thm:TSiso}, we reexpress these spaces of differential forms on the sphere in terms of the Hodge star on the sphere $*_{S^n}$ and the bubble function $u_N$. From Theorem \ref{thm:TSiso}, we quickly prove the aforementioned Corollaries \ref{cor:isos} and \ref{cor:prduality}.

Section \ref{sec:proof} is devoted to the proof of Theorem \ref{thm:TSiso0}. Then, in Section \ref{sec:psm}, we study the $\P_s^-\Lambda^k(S^n)$ spaces of differential forms on the sphere given in Definition \ref{def:pm}, and characterize these spaces in terms of the Hodge star $*_{S^n}$. Likewise, in Section \ref{sec:oops}, we study the $\ooP_s\Lambda^k(S^n)$ spaces given in Definition \ref{def:oop}, and characterize these spaces in terms of the bubble function $u_N$. Together, these results allow us to restate Theorem \ref{thm:TSiso0} solely in terms of the space of polynomial differential forms on the sphere $\P_s\Lambda^k(S^n)$, the Hodge star $*_{S^n}$, and the bubble function $u_N$, without reference to the $\P_s^-\Lambda^k(S^n)$ and $\ooP_s\Lambda^k(S^n)$ spaces, giving us Theorem \ref{thm:TSiso}.

\section{Preliminaries}\label{sec:preliminaries}
In this section, we discuss the concepts that we use in our main results. We begin by setting our notation for differential forms on the simplex $T^n$, the sphere $S^n$, and Euclidean space $\R^{n+1}$. Next, we define the spaces of polynomial differential forms that are the subject of this paper. We discuss how to compute the Hodge star on the sphere $*_{S^n}$, and we extend this operator to differential forms on $\R^{n+1}$. Finally, we define notation for the transformation $x_i=u_i^2$, and we define even and odd differential forms on the sphere.
\subsection{Notation}
\begin{notation}
  Let $T^n$ denote the standard $n$-simplex.
  \begin{equation*}
    T^n=\{x\in\R^{n+1}\mid x_i\ge0\text{ for all }i,\text{ and }x_1+\dotsb+x_{n+1}=1\}.
  \end{equation*}
  Let $S^n$ denote the unit sphere.
  \begin{equation*}
    S^n=\{u\in\R^{n+1}\mid u_1^2+\dotsb+u_{n+1}^2=1\}.
  \end{equation*}
  Let $S^n_{>0}$ denote the part of the unit sphere with strictly positive coordinates.
  \begin{equation*}
    S^n_{>0}=\{u\in S^n\mid u_i>0\text{ for all }i\}.
  \end{equation*}
  Define $T^n_{>0}$ similarly.
\end{notation}

\begin{notation}
  Let $\Lambda^k(T^n)$ denote the space of differential $k$-forms on $T^n$. If $a\in\Lambda^k(T^n)$ and $x\in T^n$, let $a_x$ denote $a$ evaluated at $x$. That is, $a_x$ is an antisymmetric $k$-linear tensor on the tangent space $T_xT^n$.

  Likewise, for $\alpha\in\Lambda^k(S^n)$ and $u\in S^n$ we have $\alpha_u$, an antisymmetric $k$-linear tensor on $T_uS^n$. Similarly, for $\hat\alpha\in\Lambda^k(\R^{n+1})$ and $u\in\R^{n+1}$, we have $\hat\alpha_u$, an antisymmetric $k$-linear tensor on $T_u\R^{n+1}\cong\R^{n+1}$.
\end{notation}

\begin{notation}\label{notation:ui}
  For $I=\{i_1<i_2<\dotsb<i_k\}\subseteq\{1,\dotsc,n+1\}$, let $u_I$ denote the product $u_{i_1}\dotsm u_{i_k}$ and let $du_I$ denote the wedge product $du_{i_1}\wedge\dotsb\wedge du_{i_k}$. Let $N=\{1,\dotsc,n+1\}$, so
  \begin{equation*}
    u_N:=u_1\dotsm u_{n+1}.
  \end{equation*}
  In the literature, the function $u_N$ is called a \emph{bubble function}.
\end{notation}

\begin{notation}
  Given a vector field $V$ on $\R^{n+1}$ and $\hat\alpha\in\Lambda^k(\R^{n+1})$, we will let $i_V\hat\alpha\in\Lambda^{k-1}(\R^{n+1})$ denote the \emph{interior product} of $\hat\alpha$ with $V$.

  We will use the same notation for the interior product on $T^n$ and $S^n$.
\end{notation}

\begin{notation}
  Let $\hat a\in\Lambda^k(\R^{n+1})$. Pulling back $\hat a$ via the inclusion map $T^n\hookrightarrow\R^{n+1}$, we obtain a differential form $a\in\Lambda^k(T^n)$. Following standard terminology, we refer to $a$ as the \emph{restriction} of $\hat a$ to $T^n$. We will also say that $\hat a$ is an \emph{extension} of $a$.

  We will also use this terminology for other inclusions, such as $S^n\hookrightarrow\R^{n+1}$.
\end{notation}

\subsection{Spaces of polynomial differential forms}
\begin{definition}\label{def:pr}
  Let $\P_r\Lambda^k(\R^{n+1})$ denote the space of differential $k$-forms on $\R^{n+1}$ whose coefficients are polynomials of degree at most $r$. Let $\P_r\Lambda^k(T^n)$ and $\P_r\Lambda^k(S^n)$ denote the restrictions of $\P_r\Lambda^k(\R^{n+1})$ to $T^n$ and $S^n$, respectively. By convention, $\P_r\Lambda^k(\R^{n+1})=\{0\}$ if $r<0$.
\end{definition}

One must be careful with the definition of $\P_r\Lambda^k(S^n)$, as illustrated by the following example.

\begin{example}\label{eg:prsn}
  Using the notation $(u,v,w)$ to represent a point in $\R^3$, we can consider the polynomial $u^3+uv^2+uw^2$. Despite having degree three, this function is in $\P_1\Lambda^0(S^n)$ because, on $S^n$, we have $u^3+uv^2+uw^2=u(u^2+v^2+w^2)=u$, which has degree one. In other words, as a function on $S^n$, $u^3+uv^2+uw^2$ is the restriction of a degree one polynomial on $\R^{n+1}$ to $S^n$.
\end{example}

\begin{definition}
  Let $X$ be the radial vector field in $\R^{n+1}$.
  \begin{equation*}
    X=\sum_{i=1}^{n+1}x_i\pp{x_i}.
  \end{equation*}
  When necessary to avoid confusion, we will also denote the radial vector field by
  \begin{equation*}
    U=\sum_{i=1}^{n+1}u_i\pp{u_i}.
  \end{equation*}
\end{definition}
Note that via the transformation $x_i=u_i^2$, we have $U=2X$.

\begin{definition}\label{def:pm}
  Let $\P_r^-\Lambda^k(\R^{n+1})$ denote those forms that are in the image under $i_X$ of forms of lower degree. That is,
  \begin{equation*}
    \P_r^-\Lambda^k(\R^{n+1}):=i_X\P_{r-1}\Lambda^{k+1}(\R^{n+1}).
  \end{equation*}
  Let $\P_r^-\Lambda^k(T^n)$ and $\P_r^-\Lambda^k(S^n)$ denote the restrictions of $\P_r^-\Lambda^k(\R^{n+1})$ to $T^n$ and $S^n$, respectively.
\end{definition}

Note that this definition of $\P_r^-\Lambda^k(T^n)$ differs from that of Arnold, Falk, and Winther \cite{afw06}. We show the equivalence of the two definitions in Proposition \ref{prop:prmdfn}.

We now discuss two different notions of ``vanishing trace''.

\begin{definition}\label{def:op}
  Let $\oP_r\Lambda^k(T^n)$ and $\oP_r^-\Lambda^k(T^n)$ denote those forms in $\P_r\Lambda^k(T^n)$ and $\P_r^-\Lambda^k(T^n)$, respectively, whose restriction to $\partial T$ vanishes. That is, $a\in\oP_r\Lambda^k(T^n)$ if $a_x(V_1,\dotsc,V_k)$ for any $x\in\partial T^n$ and any $V_1,\dotsc,V_k$ tangent to $\partial T$.
\end{definition}

\begin{definition}\label{def:oop}
  Let $\Gamma_i\subset\R^{n+1}$ denote the hyperplane defined by $u_i=0$, and let $\Gamma=\bigcup_{i=1}^{n+1}\Gamma_i$. Let $\ooP_s\Lambda^k(\R^{n+1})$ denote those forms $\hat\alpha\in\P_s\Lambda^k(\R^{n+1})$ such that $\hat\alpha_u=0$ for all $u\in\Gamma$. Likewise, let $\ooP_s\Lambda^k(S^n)$ denote those forms $\alpha$ in $\P_s\Lambda^k(S^n)$ such that $\alpha_u=0$ for all $u\in S^n\cap\Gamma$. We define $\ooP_s^-\Lambda^k(\R^{n+1})$ and $\ooP_s^-\Lambda^k(S^n)$ similarly.
\end{definition}

Note that saying that $\hat\alpha$ is in $\ooP_s\Lambda^k(\R^{n+1})$ is stronger than simply saying that the restriction of $\hat\alpha$ to each $\Gamma_i$ vanishes. Saying that $\hat\alpha\in\ooP_s\Lambda^k(\R^{n+1})$ means that $\hat\alpha_u(V_1,\dotsc,V_k)=0$ for any $u\in\Gamma_i$ and \emph{arbitrary} vectors $V_1,\dotsc,V_k\in T_u\mathbb R^{n+1}$, not just vectors tangent to $\Gamma_i$ as with restriction.

\begin{example}
  Using coordinates $(u,v)$ on $\R^2$, consider $\hat\alpha=u\,dv\in\P_1\Lambda^1(\R^2)$.  Observe that the restriction of $\hat\alpha$ to $\{v=0\}$ vanishes because the tangent space of $\{v=0\}$ is spanned by $\pp u$, and $u\,dv(\pp u)=0$. However, $\hat\alpha\notin\ooP_1\Lambda^1(\R^2)$ because it does not vanish on vectors normal to $\{v=0\}$: we have $u\,dv(\pp v)=u$, which is not identically zero on the line $\{v=0\}$.
\end{example}

Note also that if $\hat\alpha\in\P_s\Lambda^k(\R^{n+1})$ is an extension of $\alpha\in\P_s\Lambda^k(S^n)$, checking that $\alpha_u=0$ only involves checking that $\alpha_u(V_1,\dotsc,V_k)=0$ for vectors tangent to $S^n$, whereas checking that $\hat\alpha_u=0$ involves checking that $\hat\alpha_u(V_1,\dotsc,V_k)=0$ for all vectors.

\begin{example}\label{eg:udu}
  Using coordinates $(u,v)$ on $\R^2$, consider $\hat\alpha=v\,dv\in\P_1\Lambda^1(\R^2)$, and let $\alpha$ be the restriction of $\hat\alpha$ to $S^1=\{(u,v)\mid u^2+v^2=1\}$.
  We claim that, on the circle, we have that $\alpha\in\ooP_1\Lambda^1(S^1)$. Indeed, $u^2+v^2=1$ implies that $u\,du+v\,dv=0$, so $\alpha=-u\,du$. Thus $\alpha$ vanishes when we set $u=0$ or $v=0$.
  In contrast, $\hat\alpha\notin\ooP_1\Lambda^1(\R^2)$, since $\hat\alpha$ does not vanish when $u=0$. 
\end{example}

\subsection{The Hodge star}
\begin{notation}
  Let $*_{S^n}\colon\Lambda^k(S^n)\to\Lambda^{n-k}(S^n)$ denote the Hodge star with respect to the standard metric on $S^n$, and let $*_{\R^{n+1}}\colon\Lambda^k(\R^{n+1})\to\Lambda^{n+1-k}(\R^{n+1})$ denote the standard Hodge star on $\R^{n+1}$.
\end{notation}

As we will see, computing $*_{S^n}\alpha$ is a straightforward computation.
\begin{definition}
  Let $\nu$ denote the outward unit conormal to the sphere, so
  \begin{equation*}
    \nu=\sum_{i=1}^{n+1}u_i\,du_i.
  \end{equation*}
\end{definition}

\begin{proposition}\label{prop:spherehodgestar}
  Let $\alpha\in\Lambda^k(S^n)$. Let $\hat\alpha\in\Lambda^k(\R^{n+1})$ be an extension of $\alpha$. Then $*_{S^n}\alpha$ is the restriction to $S^n$ of
  \begin{equation*}
    *_{\R^{n+1}}(\nu\wedge\hat\alpha).
  \end{equation*}
  \begin{proof}
    Proposition \ref{prop:hodgehyperplane} gives the corresponding result for hyperplanes of oriented inner product spaces. We apply it to the hyperplane $T_uS^n\subset T_u\R^{n+1}$.
  \end{proof}
\end{proposition}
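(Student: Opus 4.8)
The plan is to reduce the statement about the sphere to a purely linear-algebraic fact about a hyperplane in an oriented inner product space, which is the content of the cited Proposition \ref{prop:hodgehyperplane}. The key observation is that the Hodge star $*_{S^n}$ is defined pointwise on tangent spaces: for each $u \in S^n$, the tangent space $T_u S^n$ sits inside $T_u \R^{n+1} \cong \R^{n+1}$ as the hyperplane orthogonal to the conormal $\nu_u = \sum_i u_i\, du_i$. So I would first verify that $\nu_u$ is indeed (the metric dual of) the unit normal to this hyperplane: since $u \in S^n$ we have $\sum_i u_i^2 = 1$, so $\nu_u$ has unit length in the standard inner product on $\R^{n+1}$, and it annihilates exactly the vectors tangent to the sphere.

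Next I would set up the correspondence between the intrinsic data on $S^n$ and the ambient data on $\R^{n+1}$ at a fixed point $u$. The restriction $\alpha_u$ is just the pullback of $\hat\alpha_u$ along the inclusion $T_u S^n \hookrightarrow T_u\R^{n+1}$, i.e. $\hat\alpha_u$ restricted to act on tangent vectors to the sphere. The orientation and metric on $S^n$ are the ones induced from $\R^{n+1}$ via the outward normal convention, which is precisely the hyperplane setup in Proposition \ref{prop:hodgehyperplane}. The main step is then to invoke that proposition, applied fiberwise to the hyperplane $T_u S^n \subset T_u \R^{n+1}$ with unit conormal $\nu_u$: it asserts that the induced Hodge star on the hyperplane, applied to $\alpha_u$, equals the restriction to the hyperplane of $*_{\R^{n+1}}(\nu_u \wedge \hat\alpha_u)$.

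Because this equality holds at every point $u \in S^n$ and both sides vary smoothly with $u$, it holds as an identity of differential forms on $S^n$, which is exactly the claim. I would also note that the right-hand side is independent of the choice of extension $\hat\alpha$: two extensions differ by a form that vanishes when restricted to $T_u S^n$, hence by a multiple of $\nu_u$ together with terms annihilated on the tangent space, and wedging with $\nu_u$ followed by restriction kills precisely this ambiguity. This well-definedness check is worth a sentence but is routine.

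The genuine mathematical content is entirely contained in the hyperplane lemma, Proposition \ref{prop:hodgehyperplane}, so the present proof is merely the geometric bookkeeping needed to apply it pointwise. The only part requiring care, and the place I would expect a reader to want reassurance, is matching the orientation and metric conventions: one must confirm that the induced orientation and induced metric on the hyperplane $T_u S^n$ that appear in the abstract lemma coincide with the standard Riemannian orientation and metric on the sphere, so that the intrinsically-defined $*_{S^n}$ is the same operator the lemma produces. Given the stated conventions ($\nu$ is the \emph{outward} conormal and $S^n$ carries the standard induced metric), this matching is immediate, so there is no real obstacle beyond stating it clearly.
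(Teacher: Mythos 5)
Your proposal is correct and follows exactly the paper's approach: the paper's proof is a one-line application of the hyperplane lemma (Proposition \ref{prop:hodgehyperplane}) to $T_uS^n\subset T_u\R^{n+1}$ pointwise, and your write-up simply spells out the same bookkeeping (unit conormal, induced metric and orientation, independence of the extension) in more detail. No gaps.
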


\begin{example}
  If $n=2$ and $k=1$, we can think of $\alpha\in\Lambda^1(S^2)$ as a vector field on the sphere. With this interpretation, $*_{S^2}\alpha$ rotates $\alpha$ by $90^\circ$ counterclockwise, and Proposition \ref{prop:spherehodgestar} states that one way to compute $*_{S^2}\alpha$ is to take the cross product of the normal vector with $\alpha$.
\end{example}

Proposition \ref{prop:spherehodgestar} motivates extending the definition of $*_{S^n}\colon\Lambda^k(S^n)\to\Lambda^{n-k}(S^n)$ to $*_{S^n}\colon\Lambda^k(\R^{n+1})\to\Lambda^{n-k}(\R^{n+1})$ as follows.

\begin{definition}
  For $\hat\alpha\in\Lambda^k(\R^{n+1})$, let $*_{S^n}\hat\alpha\in\Lambda^{n-k}(\R^{n+1})$ denote
  \begin{equation*}
    *_{S^n}\hat\alpha:=*_{\R^{n+1}}(\nu\wedge\hat\alpha).
  \end{equation*}
\end{definition}
With this definition, Proposition \ref{prop:spherehodgestar} states that if $\alpha\in\Lambda^k(S^n)$ is the restriction of $\hat\alpha$, then $*_{S^n}\alpha$ is the restriction of $*_{S^n}\hat\alpha$.

\subsection{The coordinate transformation between $S^n$ and $T^n$}
\begin{definition}
  Consider the transformation $\Phi\colon\R^{n+1}\to\R^{n+1}$ defined by
  \begin{equation*}
    \Phi(u)=(u_1^2,\dotsc,u_{n+1}^2)=:(x_1,\dotsc,x_{n+1})=x.
  \end{equation*}
\end{definition}
Observe that $x\in T^n$ if and only if $u\in S^n$. In fact, $\Phi$ is a diffeomorphism when restricted to $S^n_{>0}\to T^n_{>0}$.

We will use the notation $\Phi^*$ to refer to the pullback map both in the context of $\Phi\colon\R^{n+1}\to\R^{n+1}$ and in the context of $\Phi\colon S^n\to T^n$, so we have pullback maps $\Phi^*\colon\Lambda^k(\R^{n+1})\to\Lambda^k(\R^{n+1})$ and $\Phi^*\colon\Lambda^k(T^n)\to\Lambda^k(S^n)$.

Observe that if $a\in\Lambda^k(T^n)$ is the restriction of $\hat a\in\Lambda^k(\R^{n+1})$ to $T^n$, then $\Phi^*a\in\Lambda^k(S^n)$ is the restriction of $\Phi^*\hat a\in\Lambda^k(\R^{n+1})$ to $S^n$.

\subsection{Even and odd differential forms}

Because $\Phi(u_1,\dotsc,u_i,\dotsc,u_{n+1})=\Phi(u_1,\dotsc,-u_i,\dotsc,u_{n+1})$, any differential form in the image of $\Phi^*$ is invariant under all coordinate reflections. We call such forms \emph{even in all variables}, or simply \emph{even}.
\begin{definition}\label{def:evenodd}
  Let the $i$th coordinate reflection $R_i\colon\R^{n+1}\to\R^{n+1}$ be the map
  \begin{equation*}
    R_i(u_1,\dotsc,u_i,\dotsc,u_{n+1})=(u_1,\dotsc,-u_i,\dotsc,u_{n+1}).
  \end{equation*}
  We call $\hat\alpha\in\Lambda^k(\R^{n+1})$ \emph{even in all variables} or simply \emph{even} if $R_i^*\hat\alpha=\hat\alpha$ for every $i$. Similarly, we call $\hat\alpha\in\Lambda^k(\R^{n+1})$ \emph{odd in all variables} or simply \emph{odd} if $R_i^*\hat\alpha=-\hat\alpha$ for every $i$. We denote these spaces by $\Lambda^k_e(\R^{n+1})$ and $\Lambda^k_o(\R^{n+1})$, respectively.

  We define $\Lambda^k_e(S^n)$ and $\Lambda^k_o(S^n)$ similarly. We will also use the notation $\P_s\Lambda^k_e(S^n)$ and $\P_s\Lambda^k_o(S^n)$ to denote the even and odd forms in $\P_s\Lambda^k(S^n)$, respectively, and likewise for the other spaces of polynomial differential forms.
\end{definition}

As with functions in one variable, we can take even and odd parts.
\begin{definition}
  If $\hat\alpha\in\Lambda^k(\R^{n+1})$, let the \emph{even part} $\hat\alpha_e\in\Lambda^k_e(\R^{n+1})$ of $\hat\alpha$ denote the average of all possible reflections of $\hat\alpha$. That is,
  \begin{equation}\label{eq:even}
    \hat\alpha_e=\frac1{2^{n+1}}\sum_{\epsilon_1=0}^1\dotsb\sum_{\epsilon_{n+1}=0}^1(R_1^*)^{\epsilon_1}\dotsm(R_{n+1}^*)^{\epsilon_{n+1}}\hat\alpha.
  \end{equation}
  Likewise, let the \emph{odd part} of $\hat\alpha_o\in\Lambda^k_o(\R^{n+1})$ of $\hat\alpha$ denote the signed average
  \begin{equation}\label{eq:odd}
    \hat\alpha_o=\frac1{2^{n+1}}\sum_{\epsilon_1=0}^1\dotsb\sum_{\epsilon_{n+1}=0}^1(-1)^{\epsilon_1+\dotsb+\epsilon_{n+1}}(R_1^*)^{\epsilon_1}\dotsm(R_{n+1}^*)^{\epsilon_{n+1}}\hat\alpha.
  \end{equation}
  We use the same equations to define the even and odd parts of a differential form on the sphere, $\alpha\in\Lambda^k(S^n)$.
\end{definition}
If $\hat\alpha$ is a differential form with polynomial coefficients, $\hat\alpha_e$ simply extracts those terms that are even $k$-forms, and $\hat\alpha_o$ extracts those terms that are odd $k$-forms. Note that $\hat\alpha$ is not the sum of its even and odd parts; there will generally be terms that are even in some variables and odd in others.

\begin{proposition}\label{prop:restrictevenodd}
  Restriction commutes with taking even and odd parts. That is, if $\alpha\in\Lambda^k(S^n)$ is the restriction of $\hat\alpha\in\Lambda^k(\R^{n+1})$ to $S^n$, then $\alpha_e$ and $\alpha_o$ are the restrictions of $\hat\alpha_e$ and $\hat\alpha_o$, respectively.
  \begin{proof}
    If $\alpha$ is the restriction of $\hat\alpha$, then $R_i^*\alpha$ is the restriction of $R_i^*\hat\alpha$. Thus, when we restrict equations \eqref{eq:even} and \eqref{eq:odd} to $S^n$, we obtain the corresponding equations for $\alpha_e$ and $\alpha_o$.
  \end{proof}
\end{proposition}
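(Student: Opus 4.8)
The plan is to reduce the entire statement to a single assertion: pullback by a coordinate reflection commutes with restriction to the sphere. This suffices because the even and odd parts defined in \eqref{eq:even} and \eqref{eq:odd} are nothing more than fixed linear combinations of the iterated pullbacks $(R_1^*)^{\epsilon_1}\dotsm(R_{n+1}^*)^{\epsilon_{n+1}}$, and restriction (being itself a pullback, hence linear) commutes with any such linear combination. So once I know each individual $R_i^*$ intertwines with restriction, the result for $\alpha_e$ and $\alpha_o$ follows by applying restriction term by term.

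First I would record the geometric fact that each reflection $R_i$ maps $S^n$ to itself, since replacing $u_i$ by $-u_i$ leaves $u_1^2+\dotsb+u_{n+1}^2$ unchanged. Writing $\iota\colon S^n\hookrightarrow\R^{n+1}$ for the inclusion, this says $R_i\circ\iota=\iota\circ(R_i|_{S^n})$ as maps $S^n\to\R^{n+1}$. Next I would feed this commuting square into the functoriality of pullback. Since restriction is by definition $\iota^*$, we compute
\begin{equation*}
  \iota^*(R_i^*\hat\alpha)=(R_i\circ\iota)^*\hat\alpha=(\iota\circ R_i|_{S^n})^*\hat\alpha=(R_i|_{S^n})^*(\iota^*\hat\alpha)=R_i^*\alpha,
\end{equation*}
which is precisely the statement that $R_i^*\alpha$ is the restriction of $R_i^*\hat\alpha$. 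Iterating this identity over the variables appearing in a given term shows that $(R_1^*)^{\epsilon_1}\dotsm(R_{n+1}^*)^{\epsilon_{n+1}}\alpha$ is the restriction of $(R_1^*)^{\epsilon_1}\dotsm(R_{n+1}^*)^{\epsilon_{n+1}}\hat\alpha$.

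Finally, since restriction is linear, applying $\iota^*$ to the right-hand sides of \eqref{eq:even} and \eqref{eq:odd} distributes across the sums, yielding that the restriction of $\hat\alpha_e$ is $\alpha_e$ and the restriction of $\hat\alpha_o$ is $\alpha_o$. There is no genuine obstacle in this argument; the only point demanding care is the bookkeeping in the commuting-square step, namely keeping the self-map $R_i|_{S^n}$ of the sphere notationally distinct from $R_i$ on $\R^{n+1}$, but functoriality of the pullback handles this automatically once the square is written down correctly.
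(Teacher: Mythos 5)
Your proposal is correct and follows the same route as the paper's proof: the key step in both is that $R_i^*\alpha$ is the restriction of $R_i^*\hat\alpha$ (which you justify via the commuting square $R_i\circ\iota=\iota\circ(R_i|_{S^n})$ and functoriality of pullback), after which linearity of restriction applied to equations \eqref{eq:even} and \eqref{eq:odd} finishes the argument. You have simply written out in more detail the steps the paper leaves implicit.
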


In the following proposition, we summarize how the operations we have considered interact with even and odd parts.

\begin{proposition}\label{prop:starevenodd}\label{prop:nuevenodd}\label{prop:starsphereevenodd}\label{prop:xevenodd}\label{prop:unevenodd}
  The following operations preserve even and odd parts.
  \begin{align*}
    \nu\wedge\hat\alpha_e&=(\nu\wedge\hat\alpha)_e,&\nu\wedge\hat\alpha_o&=(\nu\wedge\hat\alpha)_o,\\
    i_U\hat\alpha_e&=(i_U\hat\alpha)_e,&i_U\hat\alpha_o&=(i_U\hat\alpha)_o.
  \end{align*}

  The following operations interchange even and odd parts.
  \begin{align*}
    *_{\R^{n+1}}\hat\alpha_e&=(*_{\R^{n+1}}\hat\alpha)_o,&*_{\R^{n+1}}\hat\alpha_o&=(*_{\R^{n+1}}\hat\alpha)_e,\\
    *_{S^n}\hat\alpha_e&=(*_{S^n}\hat\alpha)_o,&*_{S^n}\hat\alpha_o&=(*_{S^n}\hat\alpha)_e,\\
    u_N\hat\alpha_e&=(u_N\hat\alpha)_o,&u_N\hat\alpha_o&=(u_N\hat\alpha)_e.
  \end{align*}
\end{proposition}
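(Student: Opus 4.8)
The plan is to reduce all six pairs of identities to a single observation about how each operation interacts with the coordinate reflections $R_i^*$, and then verify the relevant commutation relation case by case. The organizing principle is this: let $T$ be a linear operation on forms on $\R^{n+1}$. If $T$ commutes with every $R_i^*$, then applying $T$ term by term to the defining average \eqref{eq:even} gives $T\hat\alpha_e=(T\hat\alpha)_e$, and likewise $T\hat\alpha_o=(T\hat\alpha)_o$; that is, $T$ preserves even and odd parts. If instead $T$ anticommutes with every $R_i^*$, meaning $TR_i^*=-R_i^*T$, then passing $T$ through a product $(R_1^*)^{\epsilon_1}\dotsm(R_{n+1}^*)^{\epsilon_{n+1}}$ produces exactly the sign $(-1)^{\epsilon_1+\dotsb+\epsilon_{n+1}}$, which converts \eqref{eq:even} into \eqref{eq:odd} and vice versa; thus $T$ interchanges even and odd parts. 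So it suffices to show that $\nu\wedge\,\cdot\,$ and $i_U$ commute with each $R_i^*$, while $*_{\R^{n+1}}$, $*_{S^n}$, and $u_N\,\cdot\,$ anticommute with each $R_i^*$.

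For the two operations that preserve even and odd parts, I would argue from naturality. Since $R_i^*\nu=\nu$ (each summand $u_j\,du_j$ of $\nu$ is fixed, because reflecting $u_i$ flips both $u_i$ and $du_i$), naturality of the wedge product under pullback gives $R_i^*(\nu\wedge\hat\alpha)=\nu\wedge R_i^*\hat\alpha$. For $i_U$, I would first check that the radial field $U$ is invariant under each reflection, $(R_i)_*U=U$, a direct computation from $R_i$ flipping the sign of both the coefficient $u_i$ and the pushforward of $\partial_{u_i}$. Naturality of the interior product under the diffeomorphism $R_i$ then yields $R_i^*(i_U\hat\alpha)=i_{(R_i)_*U}(R_i^*\hat\alpha)=i_U(R_i^*\hat\alpha)$.

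For the three operations that interchange even and odd parts, the sign comes from orientation. Each $R_i$ is an isometry of $\R^{n+1}$ that reverses orientation, so the standard transformation law for the Hodge star under an orientation-reversing isometry gives $R_i^*(*_{\R^{n+1}}\hat\alpha)=-*_{\R^{n+1}}(R_i^*\hat\alpha)$, the desired anticommutation. Since $*_{S^n}\hat\alpha=*_{\R^{n+1}}(\nu\wedge\hat\alpha)$ is the composition of an operation that commutes with $R_i^*$ and one that anticommutes with it, $*_{S^n}$ anticommutes with $R_i^*$ as well. Finally, $R_i^*u_N=-u_N$ because reflecting $u_i$ sends the single factor $u_i$ to $-u_i$, so $R_i^*(u_N\hat\alpha)=-u_N\,R_i^*\hat\alpha$.

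The calculations are all short, so the points requiring care are the two naturality invocations and the orientation bookkeeping. The interior-product case is the most delicate: it relies on the identity $R_i^*(i_V\omega)=i_{(R_i)_*V}(R_i^*\omega)$ valid for the diffeomorphism $R_i$, together with the verification that $U$ is genuinely $R_i$-invariant as a vector field, not merely that its coefficients are even functions. The Hodge-star case hinges on correctly tracking the orientation sign: it is precisely the orientation reversal of $R_i$ that turns preservation of even and odd parts into interchange, and getting this sign right is what distinguishes the first block of identities from the second.
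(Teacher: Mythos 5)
Your proposal is correct and follows essentially the same route as the paper: establish that $\nu\wedge{}$ and $i_U$ commute with each $R_i^*$ while $*_{\R^{n+1}}$, $*_{S^n}$, and $u_N$ anticommute with each $R_i^*$, then apply these relations to the averaging formulas \eqref{eq:even} and \eqref{eq:odd}. Your explicit statement of the general commutation/anticommutation principle, including the emergence of the sign $(-1)^{\epsilon_1+\dotsb+\epsilon_{n+1}}$, merely spells out the step the paper leaves to the reader.
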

\begin{proof}
  Observe that
  \begin{align*}
    R_i^*(\nu\wedge\hat\alpha)&=R_i^*\nu\wedge R_i^*\hat\alpha=\nu\wedge R_i^*\hat\alpha,\\
    R_i^*(i_U\hat\alpha)&=i_{R_i^*U}(R_i^*\hat\alpha)=i_U(R_i^*\hat\alpha),\\
    R_i^*(u_N\hat\alpha)&=(R_i^*u_N)(R_i^*\hat\alpha)=-u_N(R_i^*\hat\alpha).
  \end{align*}
  Reflections reverse orientation, so
  \begin{equation*}
    *_{\R^{n+1}}(R_i^*\hat\alpha)=-R_i^*(*_{\R^{n+1}}\hat\alpha).
  \end{equation*}
  Consequently, since $*_{S^n}\hat\alpha=*_{\R^{n+1}}(\nu\wedge\hat\alpha)$,
  \begin{equation*}
    *_{S^n}(R_i^*\hat\alpha)=-R_i^*(*_{S^n}\hat\alpha).
  \end{equation*}
  The result follows by applying these operations to equations \eqref{eq:even} and \eqref{eq:odd}.
\end{proof}

\section{Results and examples}\label{sec:results}
We state our main result that the coordinate transformation $\Phi$ induces correspondences between spaces of polynomial differential forms on $T^n$ and polynomial differential forms on $S^n$. As a consequence of these correspondences, we easily obtain the duality relationships between the $\P$ and $\P^-$ spaces. We work with the standard simplex, but all of our results apply equally well to an arbitrary simplex, by using barycentric coordinates.

\begin{theorem}\label{thm:TSiso0}
The map $\Phi^*\colon\Lambda^k(T^n)\to\Lambda^k(S^n)$ induced by the coordinate transformation $\Phi$ gives the following correspondences between polynomial differential forms on the simplex and polynomial differential forms on the sphere.
\begin{align*}
  \P_r\Lambda^k(T^n)&\xrightarrow\simeq\P_{2r+k}\Lambda^k_e(S^n),\\
  \P_r^-\Lambda^k(T^n)&\xrightarrow\simeq\P_{2r+k}^-\Lambda^k_e(S^n),\\
  \oP_r\Lambda^k(T^n)&\xrightarrow\simeq\ooP_{2r+k}\Lambda^k_e(S^n),\\
  \oP_r^-\Lambda^k(T^n)&\xrightarrow\simeq\ooP_{2r+k}^-\Lambda^k_e(S^n),
\end{align*}
where these spaces of differential forms on $S^n$ are defined in Definitions \ref{def:pr}, \ref{def:pm}, \ref{def:oop}, and \ref{def:evenodd}.
\end{theorem}
\begin{proof}
  We prove this theorem in Section \ref{sec:proof}. Specifically, the four isomorphisms are proved in Theorems \ref{thm:psphere}, \ref{thm:pmsphere}, \ref{thm:vanishingtraceTS}, and \ref{thm:oopmTS}, respectively.
\end{proof}

The results of Sections \ref{sec:psm} and \ref{sec:oops} allow us to express the above spaces of differential forms on $S^n$ solely in terms of $\P_s\Lambda^k_{e/o}(S^n)$, the Hodge star $*_{S^n}$, and the bubble function $u_N$. We obtain the following version of the theorem.

\begin{theorem}\label{thm:TSiso}
The map $\Phi^*\colon\Lambda^k(T^n)\to\Lambda^k(S^n)$ induced by the coordinate transformation $\Phi$ gives the following correspondences between polynomial differential forms on the simplex and polynomial differential forms on the sphere.
\begin{align*}
  \P_r\Lambda^k(T^n)&\xrightarrow\simeq\P_{2r+k}\Lambda^k_e(S^n),\\
  \P_r^-\Lambda^k(T^n)&\xrightarrow\simeq *_{S^n}\P_{2r+k-1}\Lambda^{n-k}_o(S^n),\\
  \oP_r\Lambda^k(T^n)&\xrightarrow\simeq u_N\P_{2r+k-n-1}\Lambda^k_o(S^n),\quad r\ge1,\\
  \oP_r^-\Lambda^k(T^n)&\xrightarrow\simeq u_N{*_{S^n}\P_{2r+k-n-2}}\Lambda^{n-k}_e(S^n).
\end{align*}
\end{theorem}
\begin{proof}
  The first isomorphism is the same as in Theorem \ref{thm:TSiso0}. In light of Theorem \ref{thm:TSiso0}, in order to obtain the second isomorphism, we must show that $\P_s^-\Lambda^k_e(S^n)=*_{S^n}\P_{s-1}\Lambda^{n-k}_o(S^n)$. We prove this claim in Corollary \ref{cor:pmhodgesphereevenodd}. To prove the third isomorphism, we must show that $\ooP_s\Lambda^k_e(S^n)=u_N\ooP_{s-n-1}\Lambda^k_o(S^n)$ if $s\ge2+k$. This claim follows from Corollary \ref{cor:oopeven}. Finally, we can prove the last isomorphism by showing that $\ooP_s^-\Lambda^k_e(S^n)=u_N\P_{s-n-1}^-\Lambda^k_o(S^n)$ and $\P_{s-n-1}^-\Lambda^k_o(S^n)=*_{S^n}\P_{s-n-2}\Lambda^{n-k}_e(S^n)$. We prove the first claim in Corollary \ref{cor:oopmeven}, and the second claim follows from Corollary \ref{cor:pmhodgesphereevenodd}.
\end{proof}

As an immediate corollary, we obtain an explicit pointwise construction of the duality isomorphisms of Arnold, Falk, and Winther.
\begin{corollary}\label{cor:isos}
  The pointwise-defined map $(\Phi^*)^{-1}\circ(u_N*_{S^n})\circ\Phi^*$ is an isomorphism between the following spaces.
  \begin{equation*}
    \begin{tikzcd}[column sep = 3cm]
      \P_r\Lambda^k(T^n)\arrow[r, shift left, "(\Phi^*)^{-1}\circ(u_N*_{S^n})\circ\Phi^*"]&\arrow[l, shift left]\oP_{r+k+1}^-\Lambda^{n-k}(T^n),\\
      \P_r^-\Lambda^k(T^n)\arrow[r, shift left, "(\Phi^*)^{-1}\circ(u_N*_{S^n})\circ\Phi^*"]&\arrow[l, shift left]\oP_{r+k}\Lambda^{n-k}(T^n).
    \end{tikzcd}
  \end{equation*}
  These spaces are isomorphic via pointwise-defined maps to $\P_{2r+k}\Lambda^k_e(S^n)$ and $\P_{2r+k-1}\Lambda^{n-k}_o(S^n)$, respectively.
  \begin{proof}
    These isomorphisms follow directly from Theorem \ref{thm:TSiso} along with the fact that $*_{S^n}(*_{S^n}\alpha)=(-1)^{k(n-k)}\alpha$, so $*_{S^n}(*_{S^n}\P_{2r+k-1}\Lambda^{n-k}_o(S^n))=\P_{2r+k-1}\Lambda^{n-k}_o(S^n)$. The operations of pullback, Hodge star, and multiplication by a scalar function are all pointwise-defined maps.
  \end{proof}
\end{corollary}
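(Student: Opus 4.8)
The plan is to reduce all four statements to the first correspondence, which follows from a direct computation, and then to transport across $\Phi^*$ the operations that define the remaining three spaces, namely $i_X$ for the $\P^-$ spaces and the vanishing-trace condition for the ringed spaces. For the first isomorphism I would compute $\Phi^*$ on a monomial form. Since $\Phi^*x_i=u_i^2$ and hence $\Phi^*\,dx_i=2u_i\,du_i$, a typical generator $p(x)\,dx_I$ of $\P_r\Lambda^k(\R^{n+1})$ pulls back to $2^{\abs I}\,p(u_1^2,\dots,u_{n+1}^2)\,u_I\,du_I$. The coefficient has degree at most $2r+k$, and the factor $u_I\,du_I$ is even in every variable (the pair $u_i\,du_i$ is invariant under $R_i$), so the image lies in $\P_{2r+k}\Lambda^k_e(S^n)$. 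For surjectivity I would run this backwards: an even form has, in each slot $du_J$, a coefficient that is odd in exactly the variables of $J$ and even in the rest, so it factors as $u_J\,P_J(u_1^2,\dots,u_{n+1}^2)\,du_J$ with $\deg P_J\le r$, which is precisely $\Phi^*$ of $2^{-\abs J}P_J(x)\,dx_J$. Injectivity is immediate because $\Phi$ restricts to a diffeomorphism $S^n_{>0}\to T^n_{>0}$ and polynomial forms are determined on the dense set $T^n_{>0}$. The even/odd bookkeeping here is exactly Proposition \ref{prop:restrictevenodd}, which lets me pass to an even extension without changing the degree.

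For the second isomorphism I would use that $\Phi$ intertwines the radial fields: since $U=2X$ under the transformation, pullback satisfies $\Phi^*(i_X\hat\beta)=\tfrac12\,i_U\Phi^*\hat\beta$. Applying the first isomorphism to $\P_{r-1}\Lambda^{k+1}(T^n)$ and invoking Definition \ref{def:pm} then gives $\Phi^*\P_r^-\Lambda^k(T^n)=i_U\,\P_{2r+k-1}\Lambda^{k+1}_e(S^n)$. To recognize $i_U$ as a Hodge star I would use $\nu=U^\flat$ together with Proposition \ref{prop:spherehodgestar}, $*_{S^n}\hat\alpha=*_{\R^{n+1}}(\nu\wedge\hat\alpha)$, which yields $i_U\hat\beta=\pm\,*_{S^n}(*_{\R^{n+1}}\hat\beta)$. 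Since $*_{\R^{n+1}}$ preserves polynomial degree, sends $\Lambda^{k+1}$ to $\Lambda^{n-k}$, and interchanges even and odd parts (Proposition \ref{prop:starevenodd}), the space $i_U\,\P_{2r+k-1}\Lambda^{k+1}_e(S^n)$ becomes $*_{S^n}\,\P_{2r+k-1}\Lambda^{n-k}_o(S^n)$, as claimed.

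The two ringed isomorphisms require transporting the vanishing-trace condition. Under $\Phi$ the face $\{x_i=0\}$ of $\partial T^n$ corresponds to the coordinate section $\{u_i=0\}\cap S^n$, so $\Phi^*\oP_r\Lambda^k(T^n)$ consists of the forms in $\P_{2r+k}\Lambda^k_e(S^n)$ whose trace on each $\{u_i=0\}\cap S^n$ vanishes. The heart of the argument, and the step I expect to be the main obstacle, is the claim that this trace-vanishing subspace is exactly $u_N\,\P_{2r+k-n-1}\Lambda^k_o(S^n)$. The inclusion $\supseteq$ is easy: a term of $u_N\beta$ either carries a $du_i$, which the trace to $\{u_i=0\}$ annihilates, or has a coefficient divisible by $u_i$, which vanishes there, and $u_N$ times an odd form is even of the correct degree by Proposition \ref{prop:unevenodd}. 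The reverse inclusion is delicate because a vanishing trace does not literally mean vanishing coefficients: on each subsphere $\{u_i=0\}\cap S^n$ the forms $du_J$ with $i\notin J$ are linearly dependent through the constraint $\sum_{j\ne i}u_j^2=1$, so extracting divisibility by each $u_i$, and then inductively by the whole bubble $u_N$, requires carefully choosing representatives modulo the sphere relation before reading off coefficients. Once this $u_N$-divisibility characterization is in hand, the fourth isomorphism follows by intersecting it with the $\P^-$ description from the second step: the trace-vanishing part of $*_{S^n}\,\P_{2r+k-1}\Lambda^{n-k}_o(S^n)$ is $u_N\,*_{S^n}\,\P_{2r+k-n-2}\Lambda^{n-k}_e(S^n)$, the degree dropping by $n+1$ to account for the factor $u_N$ and the parity flipping because $u_N$ and $*_{S^n}$ each interchange even and odd.
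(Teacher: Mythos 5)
Your first two correspondences are sound and follow the paper's own route almost step for step: the monomial computation $\Phi^*(p\,dx_I)=2^k p(u_1^2,\dotsc,u_{n+1}^2)u_I\,du_I$ with the parity argument for surjectivity is Proposition \ref{prop:pr}, injectivity via the diffeomorphism $S^n_{>0}\to T^n_{>0}$ is Lemma \ref{lemma:injective}, and the identification of $i_U$ with $\pm*_{S^n}\circ*_{\R^{n+1}}^{-1}$ is Propositions \ref{prop:pmhodge} and \ref{prop:pmhodgesphere}. (In the paper the corollary itself is a two-line consequence of Theorem \ref{thm:TSiso}, so what you are really reconstructing is the theorem; note that you never actually assemble the composition $(\Phi^*)^{-1}\circ(u_N*_{S^n})\circ\Phi^*$, check the resulting indices, or invoke $*_{S^n}\circ*_{S^n}=(-1)^{k(n-k)}$ for the second row, though that bookkeeping is routine.)

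The genuine gap is exactly the step you flag as the main obstacle: that the trace-vanishing subspace of $\P_{2r+k}\Lambda^k_e(S^n)$ equals $u_N\P_{2r+k-n-1}\Lambda^k_o(S^n)$. Two things defeat the plan of ``choosing representatives modulo the sphere relation and reading off coefficients.'' First, the statement is false without a degree restriction: when $k=n$ the trace of an $n$-form to the $(n-1)$-dimensional subspheres vanishes automatically, and $u_N\vol_{S^n}$ lies in $\ooP_n\Lambda^n_e(S^n)$ (Example \ref{eg:noextend}) while $u_N\P_{-1}\Lambda^n_o(S^n)=0$; it is the hypothesis $r\ge1$, i.e.\ $s=2r+k\ge n+2$ when $k=n$, that rescues the claim, and your sketch never engages with it. Second, the obstruction is structural, not notational: a form in $\ooP_s\Lambda^k(S^n)$ need not admit \emph{any} degree-$s$ polynomial extension lying in $\ooP_s\Lambda^k(\R^{n+1})$, because for a merely trace-vanishing form the full tensor $\hat\alpha_u$ need not vanish on $\Gamma\cap S^n$, so no choice of representative lets you read off divisibility by each $u_i$. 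The paper's resolution (Section \ref{sec:oops}) is genuinely different from what you gesture at: divisibility is proved on $\R^{n+1}$ where it is trivial (Proposition \ref{prop:oopr}); then forms in $\ooP_s^-\Lambda^k(S^n)$ --- not $\ooP_s\Lambda^k(S^n)$ --- are shown to extend to $\ooP_s^-\Lambda^k(\R^{n+1})$ by homogenizing with powers of $u_1^2+\dotsb+u_{n+1}^2$, the point being that the condition $i_U\hat\alpha=0$ upgrades trace-vanishing to pointwise vanishing of the full tensor and makes homogenization legitimate (Lemmas \ref{lemma:extendp} and \ref{lemma:extendalpha}); finally the result is transferred to $\ooP_s\Lambda^k(S^n)$ by Hodge duality, which is where the exceptional $k=n$ case surfaces (Proposition \ref{prop:extendoop}). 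Without this extension-plus-duality argument, or an equally careful substitute, your third and fourth isomorphisms --- and hence both rows of the corollary --- are not established.
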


We show that these isomorphisms give forms that are dual to one another with respect to integration.

\begin{corollary}\label{cor:prduality}
  Consider nonzero $a\in\P_r\Lambda^k(T^n)$ and $b\in\oP_{r+k+1}^-\Lambda^{n-k}(T^n)$ corresponding to one another via the first isomorphism in Corollary \ref{cor:isos}. Then $\int_{T^n}a\wedge b>0$. Moreover, $a$ and $b$ depend on each other only pointwise, in the sense that for $x\in T^n$, $b_x$ depends only on $a_x$ and vice versa.

  The same holds for $a\in\P_r^-\Lambda^k(T^n)$ and $b\in\oP_{r+k}\Lambda^{n-k}(T^n)$ corresponding to one another via the second isomorphism in Corollary \ref{cor:isos}.
  \begin{proof}
    Let $\alpha=\Phi^*a$ and $\beta=\Phi^*b$, so $\beta=u_N(*_{S^n}\alpha)$. By $u$-substitution, we have
    \begin{multline*}
      \int_{T^n}a\wedge b=\int_{T^n_{>0}}a\wedge b=\int_{S^n_{>0}}\alpha\wedge\beta\\=\int_{S^n_{>0}}\alpha\wedge u_N\left(*_{S^n}\alpha\right)=\int_{S^n_{>0}}u_N\langle\alpha,\alpha\rangle_{S^n}>0
    \end{multline*}
    because $\alpha$ is not identically zero and $u_N>0$ on $S^n_{>0}$.
  \end{proof}
\end{corollary}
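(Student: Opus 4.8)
The plan is to transport the integral $\int_{T^n}a\wedge b$ to the sphere via the diffeomorphism $\Phi\colon S^n_{>0}\to T^n_{>0}$, where the claim collapses to the positivity of a weighted $L^2$-pairing. Set $\alpha=\Phi^*a$ and $\beta=\Phi^*b$. By Corollary \ref{cor:isos}, the correspondence is exactly $\beta=u_N\,{*_{S^n}}\alpha$, and by Theorem \ref{thm:TSiso} we have $\alpha\in\P_{2r+k}\Lambda^k_e(S^n)$; since $\Phi^*$ is an isomorphism and $a\neq0$, the form $\alpha$ is not identically zero. This reframing is the whole point: once we are on the sphere, the Hodge star turns $a\wedge b$ into a manifestly nonnegative integrand.

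First I would justify replacing $\int_{T^n}$ by $\int_{T^n_{>0}}$. The integrand $a\wedge b$ is a top-degree ($n$-)form, and $T^n\setminus T^n_{>0}$ is a finite union of lower-dimensional faces, which contributes nothing to the integral of an $n$-form. On $T^n_{>0}$, where $\Phi$ is a genuine diffeomorphism, the pullback formula for integrals together with the fact that $\Phi^*$ commutes with $\wedge$ gives
\[
  \int_{T^n_{>0}}a\wedge b=\int_{S^n_{>0}}\Phi^*(a\wedge b)=\int_{S^n_{>0}}\alpha\wedge\beta.
\]
Substituting $\beta=u_N\,{*_{S^n}}\alpha$ and invoking the defining identity $\alpha\wedge{*_{S^n}}\alpha=\langle\alpha,\alpha\rangle_{S^n}\,\vol_{S^n}$, the integrand becomes $u_N\langle\alpha,\alpha\rangle_{S^n}\,\vol_{S^n}$. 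Because $u_N>0$ on $S^n_{>0}$ and $\langle\alpha,\alpha\rangle_{S^n}\ge0$ with strict inequality wherever $\alpha\neq0$, and because $\alpha$ is not identically zero, the integral is strictly positive. For the pointwise dependence, I would simply observe that the three maps $\Phi^*$, multiplication by $u_N$, and $*_{S^n}$ composing to give $b$ from $a$ in Corollary \ref{cor:isos} all act pointwise, so $b_x$ is determined by $a_x$; their invertibility yields the converse. The argument for the second isomorphism is verbatim the same, with $\alpha\in{*_{S^n}}\P_{2r+k-1}\Lambda^{n-k}_o(S^n)$ replacing the even space.

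The step I expect to demand the most care is the change of variables. The integrals over $T^n$ and $S^n$ and the Hodge star $*_{S^n}$ each depend on a choice of orientation, so I would fix orientations of $T^n_{>0}$ and $S^n_{>0}$ making $\Phi$ orientation-preserving, ensuring that no spurious sign enters the pullback and that $\alpha\wedge{*_{S^n}}\alpha$ is the positive multiple $\langle\alpha,\alpha\rangle_{S^n}\,\vol_{S^n}$ rather than its negative. Restricting to the positive chambers, where $\Phi$ is an honest diffeomorphism with everywhere-positive coordinate derivatives $\partial x_i/\partial u_i=2u_i$, is precisely what keeps this orientation bookkeeping clean and the final sign unambiguously positive.
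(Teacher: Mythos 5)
Your proposal is correct and follows the same route as the paper's proof: pull the integral back to $S^n_{>0}$ via $\Phi$, substitute $\beta=u_N\,{*_{S^n}}\alpha$, and observe that the integrand $u_N\langle\alpha,\alpha\rangle_{S^n}\vol_{S^n}$ is nonnegative and not identically zero. Your added remarks on discarding the lower-dimensional faces, fixing orientations, and the pointwise nature of the three composed maps are sensible elaborations of steps the paper leaves implicit, not a different argument.
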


We provide an example of each of the two isomorphisms in the case $n=2$. To simplify notation, we use coordinates $(x,y,z)$ and $(u,v,w)$ on $\R^3$. We compute the Hodge star $*_{S^n}$ using the formula in Proposition \ref{prop:spherehodgestar}. In the case of one-forms on the two-sphere as in these examples, the Hodge star can be interpreted as rotating a vector field on the sphere $90$ degrees counterclockwise, and the formula in Proposition \ref{prop:spherehodgestar} states that one can do so by taking the cross product of the unit normal with the vector field.

\begin{example}\label{eg:prduality}
  \begin{align*}
    a&=y\,dy&&\in\P_1\Lambda^1(T^2),\\
    \alpha&=2v^3\,dv&&\in\P_3\Lambda^1_e(S^2),\\
    *_{S^2}\alpha
    &=2uv^3\,dw-2v^3w\,du&&\in *_{S^2}\P_3\Lambda^1_e(S^2)\\
    \beta&=2u^2v^4w\,dw-2uv^4w^2\,du&&\in(uvw){*_{S^2}\P_3\Lambda^1_e(S^2)},\\
    b&=xy^2\,dz-y^2z\,dx&&\in\oP_3^-\Lambda^1(T^2).
  \end{align*}
\end{example}

\begin{example}\label{eg:prmduality}
  \begin{align*}
    a&=x\,dy-y\,dx&&\in\P_1^-\Lambda^1(T^2),\\
    \alpha&=2u^2v\,dv-2uv^2\,du&&\in*_{S^2}\P_2\Lambda^1_o(S^2),\\
    \begin{split}
      *_{S^2}\alpha
      &=2(u^3v+uv^3)dw-2u^2vw\,du-2uv^2w\,dv\\
      &=2uv(u^2+v^2+w^2)\,dw-uvw\,d(u^2+v^2+w^2)\\
      &=2uv\,dw
    \end{split}&&\in\P_2\Lambda^1_o(S^2),\\
    \beta
    &=2u^2v^2w\,dw&&\in(uvw)\P_2\Lambda^1_o(S^2),\\
    b&=xy\,dz&&\in\oP_2\Lambda^1(T^2).
    \end{align*}
\end{example}

In the two examples below, we apply the isomorphism in Corollary \ref{cor:isos} to the classical case of scalar fields, that is, $n$-forms and $0$-forms.

\begin{example}\label{eg:scalarfieldsn}
  Consider a general polynomial $n$-form on $T^n$, which can be expressed as
  \begin{equation*}
    a=p(x_1,\dotsc,x_{n+1})\,dx_2\wedge\dotsb\wedge dx_{n+1}\in\P_r\Lambda^n(T^n)=\P_{r+1}^-\Lambda^n(T^n).
  \end{equation*}
  Then
  \begin{equation*}
    \alpha=2^np(u_1^2,\dotsc,u_{n+1}^2)u_2\dotsm u_{n+1}\,du_2\wedge\dotsb\wedge du_{n+1}.
  \end{equation*}
  With Proposition \ref{prop:spherehodgestar}, we can compute that
  \begin{multline}\label{eq:hodgestarvol}
    *_{S^n}(du_2\wedge\dotsb\wedge du_{n+1})=*_{\R^{n+1}}(\nu\wedge(du_2\wedge\dotsb\wedge du_{n+1}))\\=*_{\R^{n+1}}(u_1\vol_{\R^{n+1}})=u_1.
  \end{multline}
  Thus, we have
  \begin{equation*}
    *_{S^n}\alpha=2^np(u_1^2,\dotsc,u_{n+1}^2)u_2\dotsm u_{n+1}\cdot u_1=2^nu_Np(u_1^2,\dotsc,u_{n+1}^2).
  \end{equation*}
  We conclude that
  \begin{equation*}
    \beta=u_N{*_{S^n}\alpha}=2^nu_N^2p(u_1^2,\dotsc,u_{n+1}^2),
  \end{equation*}
  and so
  \begin{equation*}
    b=2^nx_1\dotsm x_{n+1}p(x_1,\dotsc,x_{n+1})\in\oP_{r+n+1}^-\Lambda^0(T^n)=\oP_{r+n+1}\Lambda^0(T^n).
  \end{equation*}
\end{example}

\begin{example}\label{eg:scalarfields0}
  Consider now a general polynomial $0$-form on $T^n$, expressed as
  \begin{equation*}
    a=p(x_1,\dotsc,x_{n+1})\in\P_r\Lambda^0(T^n)=\P_r^-\Lambda^0(T^n).
  \end{equation*}
  Then we have
  \begin{align*}
    \alpha&=p(u_1^2,\dotsc,u_{n+1}^2),\\
    *_{S^n}\alpha&=p(u_1^2,\dotsc,u_{n+1}^2)\vol_{S^n},\\
    \beta&=p(u_1^2,\dotsc,u_{n+1}^2)u_N\vol_{S^n}.
  \end{align*}
  We can verify that, on the sphere, $u_N\vol_{S^n}=u_2\dotsm u_{n+1}\,du_2\wedge\dotsb\wedge du_{n+1}$. Indeed, using equation \eqref{eq:hodgestarvol}, we see that taking $*_{S^n}$ of both sides gives $u_N$, so these $n$-forms must be equal on the sphere. We conclude that
  \begin{equation*}
    \beta=p(u_1^2,\dotsc,u_{n+1}^2)u_2\dotsm u_{n+1}\,du_2\wedge\dotsb\wedge du_{n+1},
  \end{equation*}
  and so
  \begin{equation*}
    b=\tfrac1{2^n}p(x_1,\dotsc,x_{n+1})\,dx_2\wedge\dotsb\wedge dx_{n+1}\in\oP_{r+1}^-\Lambda^n(T^n)=\oP_r\Lambda^n(T^n).
  \end{equation*}
\end{example}

\section{Proof of Theorem \ref{thm:TSiso0}}\label{sec:proof}
Theorem \ref{thm:TSiso0} claims four isomorphisms. We prove the isomorphism for $\P_r\Lambda^k(T^n)$ in Section \ref{subsec:pr}, for $\P_r^-\Lambda^k(T^n)$ in Section \ref{subsec:prm}, and for $\oP_r\Lambda^k(T^n)$ and $\oP_r^-\Lambda^k(T^n)$ in Section \ref{subsec:oop}.

\subsection{The correspondence for \texorpdfstring{$\mathcal P_r\Lambda^k(T^n)$}{Pr\textLambda k(Tn)}}\label{subsec:pr}
We first prove the result for forms on $\R^{n+1}$, and then use it to conclude the isomorphism between forms on $T^n$ and forms on $S^n$.

\begin{proposition}\label{prop:pr}
  The map $\Phi^*$ is an isomorphism between the following spaces.
  \begin{equation*}
    \begin{tikzcd}
      \P_r\Lambda^k(\R^{n+1})\arrow[r, shift left, "\Phi^*"]&\arrow[l, shift left]\P_{2r+k}\Lambda^k_e(\R^{n+1}).
    \end{tikzcd}
  \end{equation*}
  \begin{proof}
    Assume $\hat a\in\P_r\Lambda^k(\R^{n+1})$, and let $\hat\alpha=\Phi^*\hat a$. Using Notation \ref{notation:ui}, let
    \begin{equation*}
      \hat a=\sum_Ip_I(x_1,\dotsc,x_{n+1})\,dx_I,
    \end{equation*}
    where $p_I$ is a polynomial in $x$ of degree at most $r$. Since $x_i=u_i^2$, we have $dx_i=2u_i\,du_i$, so
    \begin{equation*}
      \hat\alpha=2^k\sum_Ip_I(u_1^2,\dotsc,u_{n+1}^2)u_I\,du_I.
    \end{equation*}
    The coefficient $p_I(u_1^2,\dotsc,u_{n+1}^2)$ is even of degree at most $2r$, and $u_Idu_I\in\P_k\Lambda^k_e(\R^{n+1})$, so $\hat\alpha\in\P_{2r+k}\Lambda^k_e(\R^{n+1})$.

    Conversely, assume $\hat\alpha\in\P_{2r+k}\Lambda^k_e(\R^{n+1})$. Let
    \begin{equation*}
      \hat\alpha=\sum_Iq_I(u_1,\dotsc,u_{n+1})\,du_I.
    \end{equation*}
    Observe that $R_i^*(du_I)=-du_I$ if $i\in I$ and $R_i^*(du_I)=du_I$ if $i\notin I$. Since $R_i^*\hat\alpha=\hat\alpha$, we conclude that $R_i^*q_I=-q_I$ if $i\in I$ and $R_i^*q_I=q_I$ if $i\notin I$. 
    In other words, $q_I$ is odd in the variable $u_i$ for $i\in I$ and even in $u_i$ for $i\notin I$.

    Thus, $q_I$ is divisible by $u_i$ for $i\in I$, so we can write $q_I=2^kr_Iu_I$ for a polynomial $r_I$, including a factor of $2^k$ for convenience. We see that $r_I$ is even in all variables $u_i$ for $1\le i\le n+1$. Thus, we can write $r_I(u_1,\dotsc,u_{n+1})=p_I(u_1^2,\dotsc,u_{n+1}^2)$. We conclude that
    \begin{equation*}
      \hat\alpha=2^k\sum_Ip_I(u_1^2,\dotsc,u_{n+1}^2)u_I\,du_I,
    \end{equation*}
    which is $\Phi^*\hat a$ for $\hat a=\sum_Ip_I\,dx_I$ as shown above.
  \end{proof}
\end{proposition}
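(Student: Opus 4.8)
The plan is to establish the two inclusions separately: first that $\Phi^*$ lands inside $\P_{2r+k}\Lambda^k_e(\R^{n+1})$, and then that every even form of degree at most $2r+k$ is hit. The single computational fact driving everything is that under the substitution $x_i=u_i^2$ we have $dx_i=2u_i\,du_i$. Since $\Phi^*$ is linear, showing it is a bijection onto the stated target will give the claimed isomorphism.

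First I would handle the forward direction. Writing a general element of $\P_r\Lambda^k(\R^{n+1})$ as $\hat a=\sum_I p_I(x)\,dx_I$ with $\deg p_I\le r$, applying $\Phi^*$ replaces each $p_I(x)$ by $p_I(u_1^2,\dotsc,u_{n+1}^2)$ and each $dx_I$ by $2^{|I|}u_I\,du_I$. As $|I|=k$, this yields $\Phi^*\hat a=2^k\sum_I p_I(u_1^2,\dotsc,u_{n+1}^2)\,u_I\,du_I$. I would then check two properties of each summand. Its polynomial degree is at most $2r+k$, since substituting $x_i=u_i^2$ doubles the coefficient degree to at most $2r$ while $u_I\,du_I$ contributes degree $k$. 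And it is even: the coefficient $p_I(u_1^2,\dotsc,u_{n+1}^2)$ is manifestly $R_i$-invariant for every $i$, the factor $u_i\,du_i$ is $R_i$-invariant for $i\in I$, and $u_I\,du_I$ is untouched by $R_i$ for $i\notin I$.

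The reverse direction is where the real work lies, and I expect the parity analysis to be the main obstacle. Given an even form $\hat\alpha=\sum_I q_I(u)\,du_I$, I would exploit that $R_i^*(du_I)=-du_I$ when $i\in I$ and $R_i^*(du_I)=du_I$ when $i\notin I$. Matching coefficients in $R_i^*\hat\alpha=\hat\alpha$ then forces $q_I$ to be odd in $u_i$ for each $i\in I$ and even in $u_i$ for each $i\notin I$. Being odd in $u_i$ means divisibility by $u_i$, so extracting the full factor $u_I=\prod_{i\in I}u_i$ lets me write $q_I=2^k u_I\,r_I$, where $r_I$ is now even in every variable. Hence $r_I$ is a polynomial in the $u_i^2$, say $r_I(u)=p_I(u_1^2,\dotsc,u_{n+1}^2)$, and the degree bookkeeping (the factor $u_I$ absorbs $k$ degrees, and passing to the variables $x_i=u_i^2$ halves the remaining degree $\le 2r$) shows $\deg p_I\le r$. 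Setting $\hat a=\sum_I p_I(x)\,dx_I\in\P_r\Lambda^k(\R^{n+1})$ recovers $\Phi^*\hat a=\hat\alpha$ by the forward computation.

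Finally, injectivity is immediate from the forward formula: $\Phi^*\hat a=0$ forces each $p_I(u_1^2,\dotsc,u_{n+1}^2)\,u_I=0$, and since the $du_I$ are linearly independent and the map $p_I\mapsto p_I(u^2)u_I$ is injective, every $p_I$ vanishes. Together with the surjectivity just established, this makes $\Phi^*$ the desired isomorphism. The only genuinely delicate point is the divisibility-and-halving argument in the reverse direction; the rest is routine bookkeeping with $dx_i=2u_i\,du_i$.
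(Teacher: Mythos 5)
Your proposal is correct and follows essentially the same route as the paper's proof: the forward computation via $dx_i=2u_i\,du_i$, then the parity analysis showing $q_I$ is odd in $u_i$ for $i\in I$ and even otherwise, hence divisible by $u_I$ with an even quotient that is a polynomial in the $u_i^2$. Your explicit injectivity remark is a small addition the paper leaves implicit, but the argument is the same.
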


We use this result to show that this isomorphism holds between forms on $T^n$ and $S^n$. We begin with a simple lemma.
\begin{lemma}\label{lemma:injective}
  The map $\Phi^*\colon\Lambda^k(T^n)\to\Lambda^k(S^n)$ is injective.
  \begin{proof}
    Let $a\in\Lambda^k(T^n)$, and assume that $\Phi^*a=0$. Because $\Phi\colon S^n_{>0}\to T^n_{>0}$ is a diffeomorphism, we know that $\Phi^*\colon\Lambda^k(T^n_{>0})\to\Lambda^k(S^n_{>0})$ is a bijection, so $a$ must be zero on $T^n_{>0}$. Because $a$ is continuous, it must therefore be zero on all of $T^n$.
  \end{proof}
\end{lemma}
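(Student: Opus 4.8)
The plan is to reduce the injectivity of $\Phi^*$ on all of $T^n$ to its known bijectivity on the open dense subset $T^n_{>0}$, propagating the vanishing from the interior to the boundary by a continuity argument. Recall from the preliminaries that $\Phi$ restricts to a diffeomorphism $S^n_{>0}\to T^n_{>0}$, so the pullback $\Phi^*\colon\Lambda^k(T^n_{>0})\to\Lambda^k(S^n_{>0})$ is a bijection, and in particular injective on forms over the positive part.

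First I would suppose $a\in\Lambda^k(T^n)$ satisfies $\Phi^*a=0$, and restrict this identity to $S^n_{>0}$. Since $\Phi^*a$ vanishes on all of $S^n$, it certainly vanishes on $S^n_{>0}$. Because $\Phi^*$ is a bijection between forms on $T^n_{>0}$ and forms on $S^n_{>0}$, this forces the restriction of $a$ to $T^n_{>0}$ to be zero.

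It then remains to conclude that $a$ vanishes on the boundary faces of $T^n$ as well, and this is the only step requiring care. Here I would invoke the density of $T^n_{>0}$ in $T^n$ together with the continuity (indeed smoothness) of $a$: a form that is continuous on $T^n$ and vanishes on a dense subset must vanish identically. Thus $a=0$, so the kernel of $\Phi^*$ is trivial and $\Phi^*$ is injective. The main, and rather mild, obstacle is simply making sure the continuity step is legitimate, namely that $a$ genuinely extends continuously to the boundary so that its vanishing on the dense interior forces vanishing everywhere; since $\Lambda^k(T^n)$ consists of smooth forms, this is immediate, and no estimate or additional structure is needed.
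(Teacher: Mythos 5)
Your argument is correct and is essentially identical to the paper's proof: both use the fact that $\Phi\colon S^n_{>0}\to T^n_{>0}$ is a diffeomorphism to deduce that $a$ vanishes on the dense subset $T^n_{>0}$, and then invoke continuity of $a$ to extend the vanishing to all of $T^n$. No issues.
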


\begin{theorem}[Theorem \ref{thm:TSiso0}, first isomorphism]\label{thm:psphere}
  The map $\Phi^*$ is an isomorphism between the following spaces.
  \begin{equation*}
    \begin{tikzcd}
      \P_r\Lambda^k(T^n)\arrow[r, shift left, "\Phi^*"]&\arrow[l, shift left]\P_{2r+k}\Lambda^k_e(S^n).
    \end{tikzcd}
  \end{equation*}
  \begin{proof}
    Let $a\in\P_r\Lambda^k(T^n)$. By definition, there exists an extension $\hat a\in\P_r\Lambda^k(\R^{n+1})$. Let $\hat\alpha=\Phi^*\hat a$, which is in $\P_{2r+k}\Lambda^k_e(\R^{n+1})$ by Proposition \ref{prop:pr}. Let $\alpha$ be the restriction of $\hat\alpha$ to $S^n$, so we have $\alpha=\Phi^*a$, and $\alpha\in\P_{2r+k}\Lambda^k_e(S^n)$ by definition. We conclude that $\Phi^*$ does indeed map $\P_r\Lambda^k(T^n)$ to $\P_{2r+k}\Lambda^k_e(S^n)$, and we know that this map is injective by Lemma \ref{lemma:injective}.

    To show surjectivity, let $\alpha\in\P_{2r+k}\Lambda^k_e(S^n)$. By definition, there exists an extension $\hat\alpha\in\P_{2r+k}\Lambda^k_e(\R^{n+1})$. By Proposition \ref{prop:pr}, there exists $\hat a\in\P_r\Lambda^k(\R^{n+1})$ such that $\hat\alpha=\Phi^*\hat a$. Letting $a$ be the restriction of $\hat a$ to $T^n$, we see that $a\in\P_r\Lambda^k(T^n)$ and $\alpha=\Phi^*a$.
    
  \end{proof}
\end{theorem}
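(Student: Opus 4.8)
The plan is to bootstrap from Proposition \ref{prop:pr}, which already supplies the analogous isomorphism $\Phi^*\colon\P_r\Lambda^k(\R^{n+1})\xrightarrow\simeq\P_{2r+k}\Lambda^k_e(\R^{n+1})$ over all of $\R^{n+1}$, and to push it down to the restricted spaces on $T^n$ and $S^n$. The two facts that make this transfer work are already in place: the spaces $\P_r\Lambda^k(T^n)$ and $\P_{2r+k}\Lambda^k_e(S^n)$ are by definition restrictions of the corresponding spaces on $\R^{n+1}$, and restriction commutes with $\Phi^*$ (observed immediately after the definition of $\Phi$), so that restricting $\Phi^*\hat a$ to $S^n$ yields $\Phi^*$ applied to the restriction of $\hat a$.

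First I would check that $\Phi^*$ lands in the target space. Given $a\in\P_r\Lambda^k(T^n)$, I choose an extension $\hat a\in\P_r\Lambda^k(\R^{n+1})$; by Proposition \ref{prop:pr} its image $\hat\alpha=\Phi^*\hat a$ lies in $\P_{2r+k}\Lambda^k_e(\R^{n+1})$. Restricting $\hat\alpha$ to $S^n$ gives, by the compatibility above, exactly $\Phi^*a$, which therefore lies in $\P_{2r+k}\Lambda^k_e(S^n)$ by definition. Injectivity is then immediate from Lemma \ref{lemma:injective}.

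For surjectivity I would reverse the construction. Given $\alpha\in\P_{2r+k}\Lambda^k_e(S^n)$, I first produce an \emph{even} extension $\hat\alpha\in\P_{2r+k}\Lambda^k_e(\R^{n+1})$: starting from any extension and replacing it by its even part, Proposition \ref{prop:restrictevenodd} guarantees that this even part still restricts to $\alpha_e=\alpha$, while remaining a form of degree at most $2r+k$. Applying the inverse of the isomorphism in Proposition \ref{prop:pr} yields $\hat a\in\P_r\Lambda^k(\R^{n+1})$ with $\Phi^*\hat a=\hat\alpha$, and restricting $\hat a$ to $T^n$ produces $a\in\P_r\Lambda^k(T^n)$ with $\Phi^*a=\alpha$.

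The argument is essentially bookkeeping, so I expect no serious obstacle; the one point that rewards care is the surjectivity step, where one must ensure the extension chosen on $\R^{n+1}$ is genuinely even, so that Proposition \ref{prop:pr} applies, without leaving polynomial degree $2r+k$. This is exactly what Proposition \ref{prop:restrictevenodd} delivers, since passing to the even part neither raises the degree nor disturbs the restriction of an already-even form.
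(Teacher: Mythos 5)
Your proposal is correct and follows essentially the same route as the paper: transfer the $\R^{n+1}$-level isomorphism of Proposition \ref{prop:pr} to the restricted spaces using the compatibility of $\Phi^*$ with restriction, with injectivity from Lemma \ref{lemma:injective}. Your extra care in the surjectivity step---passing to the even part of an arbitrary extension via Proposition \ref{prop:restrictevenodd} to guarantee an even extension of the same degree---is a point the paper compresses into ``by definition,'' and making it explicit is a small improvement rather than a different argument.
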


\subsection{The correspondence for \texorpdfstring{$\mathcal P_r^-\Lambda^k(T^n)$}{Pr-\textLambda k(Tn)}}\label{subsec:prm} As before, we begin by considering forms on $\R^{n+1}$. We first prove that the change of coordinates induces an isomorphism between $\P_r^-\Lambda^k(\R^{n+1})$ and $\P_{2r+k}^-\Lambda^k_e(\R^{n+1})$, and then conclude that it also induces an isomorphism between $\P_r^-\Lambda^k(T^n)$ and $\P_{2r+k}^-\Lambda^k_e(S^n)$.

\begin{proposition}\label{prop:pm}
  The map $\Phi^*$ is an isomorphism between the following spaces.
  \begin{equation*}
    \begin{tikzcd}
      \P_r^-\Lambda^k(\R^{n+1})\arrow[r, shift left, "\Phi^*"]&\arrow[l, shift left]\P_{2r+k}^-\Lambda^k_e(\R^{n+1}).
    \end{tikzcd}
  \end{equation*}
  \begin{proof}
    The key fact needed here is that $\Phi$ preserves the radial vector field up to a constant scalar factor. That is, one can compute that $\Phi_*U=2X$.
    
    If $\hat a\in\P_r^-\Lambda^k(\R^{n+1})$, then by definition $\hat a=i_X\hat b$ for some $\hat b\in\P_{r-1}\Lambda^{k+1}(\R^{n+1})$. Let $\hat\beta=\Phi^*\hat b$, which is in $\P_{2r-2+k+1}\Lambda^{k+1}_e(\R^{n+1})$ by Proposition \ref{prop:pr}. Then
    \begin{equation}\label{eq:pmcoordchange}
      \Phi^*(i_X\hat b)=\tfrac12\Phi^*\left(i_{\Phi_*U}\hat b\right)=\tfrac12i_U(\Phi^*\hat b)=\tfrac12 i_U\hat\beta.
    \end{equation}
    Thus $\Phi^*\hat a=\tfrac12 i_U\hat\beta$. Since $\hat\beta\in\P_{2r+k-1}\Lambda^{k+1}_e(\R^{n+1})$, we know that $\Phi^*\hat a\in\P_{2r+k}^-\Lambda^k_e(\R^{n+1})$ by Definition \ref{def:pm} and Proposition \ref{prop:xevenodd}.

    Conversely, if $\hat\alpha\in\P_{2r+k}^-\Lambda^k_e(\R^{n+1})$, then by definition $\hat\alpha=\tfrac12i_U\hat\beta'$ for some $\hat\beta'\in\P_{2r+k-1}\Lambda^{k+1}(\R^{n+1})$. Let $\hat\beta$ be the even part of $\hat\beta'$. By Proposition \ref{prop:xevenodd}, $\hat\alpha=\hat\alpha_e=\frac12i_U\hat\beta'_e=\frac12i_U\hat\beta$. By Proposition \ref{prop:pr}, $\hat\beta=\Phi^*\hat b$ for some $\hat b\in\P_{r-1}\Lambda^{k+1}(\R^{n+1})$. Set $\hat a=i_X\hat b$. Then $\hat a\in\P_r^-\Lambda^k(\R^{n+1})$ by definition, and $\Phi^*\hat a=\hat\alpha$ by equation \eqref{eq:pmcoordchange}.
  \end{proof}
\end{proposition}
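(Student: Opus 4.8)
The plan is to reduce the statement to a single compatibility between $\Phi^*$ and the interior product $i_X$, since by Definition \ref{def:pm} the space $\P_r^-\Lambda^k(\R^{n+1})$ is precisely the image $i_X\P_{r-1}\Lambda^{k+1}(\R^{n+1})$, and likewise $\P_{2r+k}^-\Lambda^k_e(\R^{n+1})$ consists of the even forms in $i_U\P_{2r+k-1}\Lambda^{k+1}(\R^{n+1})$. The key fact I would establish first is that $U$ is $\Phi$-related to $2X$, that is, $d\Phi_u(U_u)=2X_{\Phi(u)}$ for every $u$; this is the short computation $d\Phi(\partial_{u_i})=2u_i\,\partial_{x_i}$, which gives $\Phi_*U=2\sum_iu_i^2\,\partial_{x_i}=2X$. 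The naturality of the interior product under pullback for $\Phi$-related fields then yields the identity $\Phi^*(i_X\hat b)=\tfrac12\,i_U(\Phi^*\hat b)$, which is the engine of the whole argument. I would stress that this identity is pointwise and does not require $\Phi$ to be a diffeomorphism, so it is valid on all of $\R^{n+1}$.

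For the forward inclusion I would take $\hat a=i_X\hat b$ with $\hat b\in\P_{r-1}\Lambda^{k+1}(\R^{n+1})$, set $\hat\beta=\Phi^*\hat b$, and apply the identity to get $\Phi^*\hat a=\tfrac12\,i_U\hat\beta$. Proposition \ref{prop:pr} places $\hat\beta$ in $\P_{2r+k-1}\Lambda^{k+1}_e(\R^{n+1})$. Since contracting with $U=\sum_iu_i\,\partial_{u_i}$ raises polynomial degree by exactly one, $i_U\hat\beta$ lies in $\P_{2r+k}\Lambda^k(\R^{n+1})$, hence in $\P_{2r+k}^-\Lambda^k(\R^{n+1})$ by Definition \ref{def:pm}; evenness survives because $i_U$ commutes with taking even parts by Proposition \ref{prop:xevenodd}. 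This shows $\Phi^*\hat a\in\P_{2r+k}^-\Lambda^k_e(\R^{n+1})$.

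For surjectivity I would run this backwards. Given $\hat\alpha\in\P_{2r+k}^-\Lambda^k_e(\R^{n+1})$, write $\hat\alpha=\tfrac12\,i_U\hat\beta'$ for some $\hat\beta'\in\P_{2r+k-1}\Lambda^{k+1}(\R^{n+1})$. The one subtlety is that $\hat\beta'$ need not be even, so I would replace it by its even part $\hat\beta$; because $i_U$ commutes with even parts and $\hat\alpha$ is already even, the relation $\tfrac12\,i_U\hat\beta=\hat\alpha$ still holds. Proposition \ref{prop:pr} then produces $\hat b\in\P_{r-1}\Lambda^{k+1}(\R^{n+1})$ with $\Phi^*\hat b=\hat\beta$, and setting $\hat a=i_X\hat b$ gives $\hat a\in\P_r^-\Lambda^k(\R^{n+1})$ with $\Phi^*\hat a=\hat\alpha$ by the key identity. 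Injectivity comes for free: $\P_r^-\Lambda^k(\R^{n+1})\subseteq\P_r\Lambda^k(\R^{n+1})$, on which $\Phi^*$ is already injective by Proposition \ref{prop:pr}.

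I expect the main obstacle to be pinning down the key identity correctly: verifying the $\Phi$-relatedness $\Phi_*U=2X$ and then invoking naturality of $i_V$ under pullback, while being careful that this is a pointwise statement that remains valid even though $\Phi$ degenerates on the loci where some $u_i=0$. The second delicate point is the even/odd bookkeeping in the converse, where one must project $\hat\beta'$ onto its even part and cite Proposition \ref{prop:xevenodd} to see that this projection does not disturb the relation $\hat\alpha=\tfrac12\,i_U\hat\beta$.
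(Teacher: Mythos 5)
Your proposal is correct and follows essentially the same route as the paper: the key identity $\Phi^*(i_X\hat b)=\tfrac12 i_U(\Phi^*\hat b)$ coming from $\Phi_*U=2X$, the forward direction via Proposition \ref{prop:pr}, and the converse with the even-part projection of $\hat\beta'$ justified by Proposition \ref{prop:xevenodd}. The only additions are the explicit verification of $\Phi$-relatedness and the remark on injectivity, both of which the paper leaves implicit.
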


We conclude the corresponding isomorphism between forms on $T^n$ and forms on $S^n$.

\begin{theorem}[Theorem \ref{thm:TSiso0}, second isomorphism]\label{thm:pmsphere}
  The map $\Phi^*$ is an isomorphism between the following spaces.
  \begin{equation*}
    \begin{tikzcd}
      \P_r^-\Lambda^k(T^n)\arrow[r, shift left, "\Phi^*"]&\arrow[l, shift left]\P_{2r+k}^-\Lambda^k_e(S^n).
    \end{tikzcd}
  \end{equation*}
  \begin{proof}
    Assume $a\in\P_r^-\Lambda^k(T^n)$. By definition, $a$ has an extension $\hat a\in\P_r^-\Lambda^k(\R^{n+1})$. Then $\Phi^*\hat a$ is in $\P_{2r+k}^-\Lambda^k_e(\R^{n+1})$ by Proposition \ref{prop:pm}. Since $\Phi^*a$ is the restriction of $\Phi^*\hat a$ to $S^n$, we have that $\Phi^*a\in\P_{2r+k}^-\Lambda^k_e(S^n)$ by definition.

    Conversely, if $\alpha\in\P_{2r+k}^-\Lambda^k_e(S^n)$, then by definition it has an extension $\hat\alpha\in\P_{2r+k}^-\Lambda^k_e(\R^{n+1})$. By Proposition \ref{prop:pm}, $\hat\alpha=\Phi^*\hat a$ for $\hat a\in\P_r^-\Lambda^k(\R^{n+1})$. Letting $a$ be the restriction of $\hat a$ to $T^n$, we have that $a\in\P_r^-\Lambda^k(T^n)$ by definition, and $\alpha=\Phi^*a$.
  \end{proof}
\end{theorem}

\subsection{The correspondence for \texorpdfstring{$\mathring{\mathcal P}_r\Lambda^k(T^n)$}{oPr\textLambda k(Tn)} and \texorpdfstring{$\mathring{\mathcal P}_r^-\Lambda^k(T^n)$}{oPr-\textLambda k(Tn)}}\label{subsec:oop} We now show that forms on $T^n$ with vanishing trace correspond to forms on $S^n$ with vanishing trace. However, the notions of ``vanishing trace'' are different; see Definitions \ref{def:op} and \ref{def:oop} and the discussion and examples that follow.

\begin{theorem}[Theorem \ref{thm:TSiso0}, third isomorphism]\label{thm:vanishingtraceTS}
  The map $\Phi^*$ is an isomorphism between the following spaces.
  \begin{equation*}
    \begin{tikzcd}
      \oP_r\Lambda^k(T^n)\arrow[r, shift left, "\Phi^*"]&\arrow[l, shift left]\ooP_{2r+k}\Lambda^k_e(S^n).
    \end{tikzcd}
  \end{equation*}
  \begin{proof}
    Let $T^{n-1}_i$ denote $T^n\cap\Gamma_i$, and likewise let $S^{n-1}_i$ denote $S^n\cap\Gamma_i$. Let $a\in\oP_r\Lambda^k(T^n)$, and let $\alpha=\Phi^*a$. We know that $\alpha\in\P_{2r+k}\Lambda^k_e(S^n)$, and we aim to show that $\alpha\in\ooP_{2r+k}\Lambda^k_e(S^n)$. Fix $1\le i\le n+1$, and let $\bar a$ and $\bar\alpha$ denote the restrictions of $a$ and $\alpha$ to $T^{n-1}_i$ and $S^{n-1}_i$, respectively.

    By definition, $\bar a=0$. Since $\Phi$ maps $S^{n-1}_i$ to $T^{n-1}_i$, we have that $\bar\alpha=\Phi^*\bar a=0$. In other words, we know that $\alpha$ vanishes on vectors tangent to $S^{n-1}_i$. It remains to to show that $\alpha$ vanishes on the vector $\pp{u_i}$ normal to $S^{n-1}_i$, which we can do using the fact that $\alpha$ is even. More precisely, let $\overline{i_{\pp{u_i}}\alpha}$ denote the restriction of the interior product $i_{\pp{u_i}}\alpha$ to $S^{n-1}_i$. We must check $\overline{i_{\pp{u_i}}\alpha}=0$.

    Applying the reflection $R_i$, we have that $R_i^*\bigl(\pp{u_i}\bigr)=-\pp{u_i}$, so $R_i^*\Bigl(i_{\pp{u_i}}\alpha\Bigr)=-i_{\pp{u_i}}\alpha$ because $\alpha$ is even. On the other hand, the reflection $R_i$ fixes $S^{n-1}_i$, so the restrictions of $R_i^*\Bigl(i_{\pp{u_i}}\alpha\Bigr)$ and $i_{\pp{u_i}}\alpha$ to $S^{n-1}_i$ are equal. We conclude that $\overline{i_{\pp{u_i}}\alpha}=-\overline{i_{\pp{u_i}}\alpha}$, so $\overline{i_{\pp{u_i}}\alpha}=0$, as desired.

    Conversely, assume that $\alpha\in\ooP_{2r+k}\Lambda^k_e(S^n)$. We know that $\alpha=\Phi^*a$ for $a\in\P_r\Lambda^k(T^n)$, and we aim to show that $a\in\oP_r\Lambda^k(T^n)$. We have in particular that the restriction of $\alpha$ to $S^{n-1}_i$ is zero, that is,  $\bar\alpha=0$ in the above notation. Thus, $\Phi^*\bar a=\bar\alpha=0$. Applying Lemma \ref{lemma:injective} to $\Phi^*\colon\Lambda^k(T^{n-1}_i)\to\Lambda^k(S^{n-1}_i)$, we conclude that $\bar a=0$. Thus, the restriction of $a$ to $T^{n-1}_i$ vanishes for all $i$, so we conclude that $a\in\oP_r\Lambda^k(T^n)$, as desired.
  \end{proof}
\end{theorem}

\begin{theorem}[Theorem \ref{thm:TSiso0}, fourth isomorphism]\label{thm:oopmTS}
  The map $\Phi^*$ is an isomorphism between the following spaces.
  \begin{equation*}
    \begin{tikzcd}
      \oP_r^-\Lambda^k(T^n)\arrow[r, shift left, "\Phi^*"]&\arrow[l, shift left]\ooP_{2r+k}^-\Lambda^k_e(S^n).
    \end{tikzcd}
  \end{equation*}
  \begin{proof}
    Theorem \ref{thm:vanishingtraceTS} tells us that $\Phi^*$ is an isomorphism between $\oP_r\Lambda^k(T^n)$ and $\ooP_{2r+k}\Lambda^k_e(S^n)$, and Theorem \ref{thm:pmsphere} tells us that $\Phi^*$ is an isomorphism between $\P_r^-\Lambda^k(T^n)$ and $\P_{2r+k}^-\Lambda^k_e(S^n)$. Taking the intersection, we obtain the desired result.
  \end{proof}
\end{theorem}

\section{Characterizations of \texorpdfstring{$\P_s^-\Lambda^k(S^n)$}{Ps-\textLambda k(Sn)}}\label{sec:psm} In Section \ref{subsec:prm}, we showed that $\Phi^*$ is an isomorphism between $\P_r^-\Lambda^k(T^n)$ and $\P_{2r+k}^-\Lambda^k_e(S^n)$. To complete the proof of the correspondence in Theorem \ref{thm:TSiso} for the $\P_r^-\Lambda^k(T^n)$ spaces, we show in this section that $\P_{2r+k}^-\Lambda^k_e(S^n)=*_{S^n}\P_{2r+k-1}\Lambda^{n-k}_o(S^n)$. As before, we first begin with the corresponding result on $\R^{n+1}$.

\begin{proposition}\label{prop:pmhodge}
  The space $\P_s^-\Lambda^k(\R^{n+1})$ is the image under $*_{S^n}$ of differential forms of lower polynomial degree. That is,
  \begin{equation*}
    \P_s^-\Lambda^k(\R^{n+1})=*_{S^n}\P_{s-1}\Lambda^{n-k}(\R^{n+1}).
  \end{equation*}
  \begin{proof}
    By definition, $\P_s^-\Lambda^k(\R^{n+1})=i_U\P_{s-1}\Lambda^{k+1}(\R^{n+1})$. Thus, we aim to show that any form that can be expressed as $*_{S^n}\hat\gamma$ for $\hat\gamma\in\P_{s-1}\Lambda^{n-k}(\R^{n+1})$ can be expressed as $i_U\hat\beta$ for $\hat\beta\in\P_{s-1}\Lambda^{k+1}(\R^{n+1})$, and vice versa.

    Because $\nu$ is dual to $U$ with respect to the standard metric, we use Proposition \ref{prop:interiorhodge} to compute that for $\hat\gamma\in\Lambda^{n-k}(\R^{n+1})$,
    \begin{equation*}
      *_{S^n}\hat\gamma=*_{\R^{n+1}}(\nu\wedge\hat\gamma)=(-1)^{n-k}*_{\R^{n+1}}(\hat\gamma\wedge\nu)=(-1)^{n-k}i_U(*_{\R^{n+1}}\hat\gamma).
    \end{equation*}
    Thus, setting $\hat\beta:=(-1)^{n-k}{*_{\R^{n+1}}\hat\gamma}$, we have $*_{S^n}\hat\gamma=i_U\hat\beta$. If $\hat\gamma\in\P_{s-1}\Lambda^{n-k}(\R^{n+1})$, then $\hat\beta\in\P_{s-1}\Lambda^{k+1}(\R^{n+1})$. Conversely, if $\hat\beta\in\P_{s-1}\Lambda^{k+1}(\R^{n+1})$, then $\hat\gamma=(-1)^{n-k}{*_{\R^{n+1}}^{-1}\hat\beta}\in\P_{s-1}\Lambda^{n-k}(\R^{n+1})$.
  \end{proof}
\end{proposition}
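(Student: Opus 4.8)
The plan is to reduce both sides of the claimed equality to the same image of an interior product. Recall from Definition \ref{def:pm} that $\P_s^-\Lambda^k(\R^{n+1})=i_X\P_{s-1}\Lambda^{k+1}(\R^{n+1})$, and that $X$ and $U=\sum_i u_i\,\pp{u_i}$ are two names for the single radial vector field on $\R^{n+1}$, so $i_X$ and $i_U$ agree as operators on forms. Thus it suffices to prove that $*_{S^n}\P_{s-1}\Lambda^{n-k}(\R^{n+1})=i_U\P_{s-1}\Lambda^{k+1}(\R^{n+1})$, and the entire argument takes place at the level of forms on $\R^{n+1}$, with no appeal to the diffeomorphism $\Phi$.

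The key point is that $\nu=\sum_i u_i\,du_i$ is precisely the metric dual of $U$. Starting from the definition $*_{S^n}\hat\gamma=*_{\R^{n+1}}(\nu\wedge\hat\gamma)$, I would commute the one-form $\nu$ past a form $\hat\gamma\in\Lambda^{n-k}(\R^{n+1})$, picking up the sign $(-1)^{n-k}$, and then invoke the standard identity relating wedging with a covector to interior product with its dual vector under the Hodge star. This yields
\begin{equation*}
  *_{S^n}\hat\gamma=*_{\R^{n+1}}(\nu\wedge\hat\gamma)=(-1)^{n-k}*_{\R^{n+1}}(\hat\gamma\wedge\nu)=(-1)^{n-k}i_U\bigl(*_{\R^{n+1}}\hat\gamma\bigr).
\end{equation*}

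With this formula both inclusions follow simultaneously. The Euclidean Hodge star $*_{\R^{n+1}}\colon\Lambda^{n-k}(\R^{n+1})\to\Lambda^{k+1}(\R^{n+1})$ sends each $du_I$ to $\pm du_{I^c}$ while leaving the coefficient functions untouched, so it preserves polynomial degree and restricts to a bijection $\P_{s-1}\Lambda^{n-k}(\R^{n+1})\to\P_{s-1}\Lambda^{k+1}(\R^{n+1})$. Hence, as $\hat\gamma$ ranges over $\P_{s-1}\Lambda^{n-k}(\R^{n+1})$, the form $\hat\beta:=(-1)^{n-k}*_{\R^{n+1}}\hat\gamma$ ranges over all of $\P_{s-1}\Lambda^{k+1}(\R^{n+1})$, and the displayed identity becomes $*_{S^n}\hat\gamma=i_U\hat\beta$. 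This shows at once that every $*_{S^n}\hat\gamma$ lies in $i_U\P_{s-1}\Lambda^{k+1}(\R^{n+1})=\P_s^-\Lambda^k(\R^{n+1})$ and that every such $i_U\hat\beta$ is realized, giving the desired equality of spaces.

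The only nontrivial ingredient, and hence the main obstacle, is the musical identity $*_{\R^{n+1}}(\hat\gamma\wedge\nu)=i_U(*_{\R^{n+1}}\hat\gamma)$ together with its sign, which is the standard relationship between the Hodge star and the interior product for a metric-dual pair $(\nu,U)$. I would establish it either by a direct computation on the monomial basis $du_I$ or by citing the general lemma for this relationship; everything else in the argument is a degree count and elementary reordering of wedge factors.
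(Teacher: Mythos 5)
Your proof is correct and follows essentially the same route as the paper: the same reduction to showing $*_{S^n}\P_{s-1}\Lambda^{n-k}(\R^{n+1})=i_U\P_{s-1}\Lambda^{k+1}(\R^{n+1})$, the same identity $*_{S^n}\hat\gamma=(-1)^{n-k}i_U(*_{\R^{n+1}}\hat\gamma)$ via the duality of $\nu$ and $U$ (the paper's Proposition \ref{prop:interiorhodge}), and the same observation that $*_{\R^{n+1}}$ is a degree-preserving bijection between the relevant polynomial spaces. Your phrasing of the two inclusions as a single bijectivity statement is a minor streamlining of the paper's separate forward and inverse arguments, not a different method.
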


We immediately obtain the corresponding result for forms on $S^n$.
\begin{proposition}\label{prop:pmhodgesphere}
  The space $\P_s^-\Lambda^k(S^n)$ is the image under $*_{S^n}$ of differential forms of lower polynomial degree. That is,
  \begin{equation}
    \P_s^-\Lambda^k(S^n)=*_{S^n}\P_{s-1}\Lambda^{n-k}(S^n).\label{eq:pmhodgesphere}
  \end{equation}

  \begin{proof}
    The restriction of $\P_s^-\Lambda^k(\R^{n+1})$ to the sphere is $\P_s^-\Lambda^k(S^n)$ by definition. The restriction of $*_{S^n}\P_{s-1}\Lambda^{n-k}(\R^{n+1})$ to the sphere is $*_{S^n}\P_{s-1}\Lambda^{n-k}(S^n)$ because if $\hat\gamma$ is an extension of $\gamma$, then $*_{S^n}\hat\gamma$ is an extension of $*_{S^n}\gamma$.
  \end{proof}
\end{proposition}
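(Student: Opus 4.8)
The plan is to descend the Euclidean identity of Proposition \ref{prop:pmhodge} to the sphere by restriction. Both sides of \eqref{eq:pmhodgesphere} are, by construction, restrictions to $S^n$ of the corresponding spaces of forms on $\R^{n+1}$, so once I know that restriction carries the Euclidean identity $\P_s^-\Lambda^k(\R^{n+1})=*_{S^n}\P_{s-1}\Lambda^{n-k}(\R^{n+1})$ to the asserted sphere identity, I am done. The left-hand side is immediate: by Definition \ref{def:pm}, the space $\P_s^-\Lambda^k(S^n)$ is \emph{defined} as the restriction of $\P_s^-\Lambda^k(\R^{n+1})$ to $S^n$.

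The content lies in the right-hand side, where I must show that the restriction of the space $*_{S^n}\P_{s-1}\Lambda^{n-k}(\R^{n+1})$ to $S^n$ equals $*_{S^n}\P_{s-1}\Lambda^{n-k}(S^n)$; that is, that $*_{S^n}$ commutes with restriction. Here I would invoke Proposition \ref{prop:spherehodgestar}: for $\gamma\in\Lambda^{n-k}(S^n)$ with extension $\hat\gamma\in\Lambda^{n-k}(\R^{n+1})$, the form $*_{S^n}\hat\gamma=*_{\R^{n+1}}(\nu\wedge\hat\gamma)$ restricts to $*_{S^n}\gamma$. One inclusion then follows by restricting each $*_{S^n}\hat\gamma$ for $\hat\gamma\in\P_{s-1}\Lambda^{n-k}(\R^{n+1})$; for the reverse inclusion I use that every $\gamma\in\P_{s-1}\Lambda^{n-k}(S^n)$ admits, by definition, an extension $\hat\gamma\in\P_{s-1}\Lambda^{n-k}(\R^{n+1})$, so that $*_{S^n}\gamma$ is the restriction of the corresponding $*_{S^n}\hat\gamma$.

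Combining the two observations, I restrict the equality of Proposition \ref{prop:pmhodge} to $S^n$: the restriction of its left side is $\P_s^-\Lambda^k(S^n)$ and the restriction of its right side is $*_{S^n}\P_{s-1}\Lambda^{n-k}(S^n)$, which yields \eqref{eq:pmhodgesphere}. The only step requiring genuine care — and thus the main, though minor, obstacle — is the commutation of $*_{S^n}$ with restriction on the right-hand side, which is precisely the content of Proposition \ref{prop:spherehodgestar} together with the fact that every polynomial form on $S^n$ has a polynomial extension of the same degree; everything else in the argument is formal.
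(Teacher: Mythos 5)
Your proposal is correct and follows the paper's own argument exactly: restrict the Euclidean identity of Proposition \ref{prop:pmhodge} to $S^n$, using that the left side restricts to $\P_s^-\Lambda^k(S^n)$ by definition and that $*_{S^n}$ commutes with restriction (Proposition \ref{prop:spherehodgestar}) on the right. You merely spell out the two inclusions for the right-hand side more explicitly than the paper does.
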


\begin{corollary}\label{cor:pmhodgesphereevenodd}
  We have that
  \begin{align*}
    \P_s^-\Lambda^k_e(S^n)&=*_{S^n}\P_{s-1}\Lambda^{n-k}_o(S^n),\\
    \P_s^-\Lambda^k_o(S^n)&=*_{S^n}\P_{s-1}\Lambda^{n-k}_e(S^n).
  \end{align*}

  \begin{proof}
    We use Propositions \ref{prop:restrictevenodd} and \ref{prop:starevenodd} to take the even and odd parts of both sides of equation \eqref{eq:pmhodgesphere}.
  \end{proof}
\end{corollary}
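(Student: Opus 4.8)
The plan is to derive both identities at once by applying the even-part and odd-part projections to the already-established equation \eqref{eq:pmhodgesphere}, $\P_s^-\Lambda^k(S^n)=*_{S^n}\P_{s-1}\Lambda^{n-k}(S^n)$. The first thing I would record is that both spaces in \eqref{eq:pmhodgesphere} are invariant under each reflection pullback $R_i^*$. For $\P_{s-1}\Lambda^{n-k}(S^n)$ this is transparent, since $R_i^*$ preserves polynomial degree and sends each $du_J$ to $\pm du_J$; for $\P_s^-\Lambda^k(S^n)$ it follows because $R_i^*$ commutes with $i_U$ up to the reflection of the radial field $U$, which is fixed, exactly as computed in Proposition \ref{prop:xevenodd}. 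By Proposition \ref{prop:restrictevenodd} it suffices to check this invariance at the level of the ambient spaces on $\R^{n+1}$, where it is immediate from the definitions. The point of this observation is that, for a space $W$ closed under every $R_i^*$, the symmetrization $P_e$ of \eqref{eq:even} (and likewise $P_o$ of \eqref{eq:odd}) maps $W$ into itself, so $P_e(W)$ coincides with the subspace $W_e$ of forms in $W$ that are \emph{already} even; in other words, ``the even part of the space'' and ``the even forms inside the space'' agree.

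With this in hand I would apply $P_e$ to both sides of \eqref{eq:pmhodgesphere}. The left-hand side becomes $\P_s^-\Lambda^k_e(S^n)$ by the previous paragraph. For the right-hand side, Proposition \ref{prop:starsphereevenodd} tells us that $*_{S^n}$ interchanges even and odd parts, so $(*_{S^n}\gamma)_e=*_{S^n}(\gamma_o)$ for every $\gamma\in\P_{s-1}\Lambda^{n-k}(S^n)$; as $\gamma$ ranges over this space, $\gamma_o$ ranges over $\P_{s-1}\Lambda^{n-k}_o(S^n)$, again using $R_i^*$-invariance. Hence the even part of the right-hand side is precisely $*_{S^n}\P_{s-1}\Lambda^{n-k}_o(S^n)$, and equating the two sides yields the first identity. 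Applying $P_o$ instead of $P_e$, and using that $*_{S^n}$ sends odd forms to even ones, produces the second identity in exactly the same way.

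I do not expect a genuine obstacle: the argument is bookkeeping with the reflection projections and the two facts that $i_U$ preserves even/odd parts while $*_{S^n}$ swaps them. The only step warranting care is the identification $P_e(W)=W_e$, i.e.\ confirming that the spaces in \eqref{eq:pmhodgesphere} are closed under the \emph{individual} reflections $R_i^*$ and not merely under the full symmetrization; once that invariance is verified on $\R^{n+1}$ and transported to $S^n$ via Proposition \ref{prop:restrictevenodd}, the remainder is a direct application of the even/odd calculus of Proposition \ref{prop:starevenodd}.
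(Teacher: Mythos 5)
Your proposal is correct and follows the same route as the paper: apply the even/odd projections of equations \eqref{eq:even}--\eqref{eq:odd} to both sides of \eqref{eq:pmhodgesphere}, using Proposition \ref{prop:restrictevenodd} to pass between $\R^{n+1}$ and $S^n$ and Proposition \ref{prop:starevenodd} to swap parities across $*_{S^n}$. The paper compresses this to one sentence; your explicit check that both spaces are closed under each individual $R_i^*$ (so that the projected space equals the subspace of forms of the given parity) is exactly the bookkeeping the paper leaves implicit.
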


We will also need the following characterization of $\P_s^-\Lambda^k(\R^{n+1})$.
\begin{proposition}\label{prop:pmker}
  Consider
  \begin{equation*}
    i_U\colon\P_s\Lambda^k(\R^{n+1})\to\P_{s+1}\Lambda^{k-1}(\R^{n+1}).
  \end{equation*}
  Then for $k\ge1$, 
  \begin{equation}\label{eq:keriu}
    \ker i_U=\P_s^-\Lambda^k(\R^{n+1})
  \end{equation}
  For $k=0$, assuming $s\ge0$,
  \begin{equation*}
    \ker i_U=\P_s\Lambda^0(\R^{n+1})=\P_s^-\Lambda^0(\R^{n+1})+\P_0\Lambda^0(\R^{n+1}).
  \end{equation*}
  If $s<0$, equation \eqref{eq:keriu} holds vacuously.
  \begin{proof}
    By definition $\P_s^-\Lambda^k(\R^{n+1})=i_U\P_{s-1}\Lambda^{k+1}(\R^{n+1})$. Since $i_U\circ i_U=0$, we conclude that $\P_s^-\Lambda^k(\R^{n+1})\subseteq\ker i_U$. Since $i_U$ vanishes on $0$-forms, we also have that $\P_s^-\Lambda^0(\R^{n+1})+\P_0\Lambda^0(\R^{n+1})\subseteq\P_s\Lambda^0(\R^{n+1})=\ker i_U$.

    Conversely, let $\hat\alpha\in\ker i_U$. Assuming $s\ge0$, decompose $\hat\alpha$ into homogeneous components. That is, write $\hat\alpha=\hat\alpha_s+\dotsb+\hat\alpha_0$, where the coefficients of $\hat\alpha_j$ are homogeneous polynomials of degree $j$. Observe that the coefficients of $i_U\hat\alpha_j$ are homogeneous polynomials of degree $j+1$. Thus, $i_U\hat\alpha=0$ implies that $i_U\hat\alpha_j=0$ for all $j$.

    We show that $\hat\alpha_j\in\P_s^-\Lambda^k(\R^{n+1})$ by showing that the Lie derivative $\mathfrak L_U\hat\alpha_j$ is in $\P_s^-\Lambda^k(\R^{n+1})$ and that $\mathfrak L_U\hat\alpha_j$ is a constant multiple of $\hat\alpha_j$.

    Using the Cartan formula, we have that
    \begin{equation*}
      \mathfrak L_U\hat\alpha_j=i_Ud\hat\alpha_j+di_U\hat\alpha_j=i_Ud\hat\alpha_j.
    \end{equation*}
    We have that $\hat\alpha_j\in\P_j\Lambda^k(\R^{n+1})\subseteq\P_s\Lambda^k(\R^{n+1})$, so $d\hat\alpha_j\in\P_{s-1}\Lambda^{k+1}(\R^{n+1})$, and so $\mathfrak L_U\hat\alpha_j\in\P_s^-\Lambda^k(\R^{n+1})$ by definition.

    To show that $\mathfrak L_U\hat\alpha_j$ is a constant multiple of $\hat\alpha_j$, we start with the fact that $\mathfrak L_Uu_i=u_i$. Because the Lie derivative commutes with the exterior derivative $d$, we then have $\mathfrak L_U(du_i)=du_i$. The form $\hat\alpha_j$ is a linear combination of products of $j$ terms of the form $u_i$ and $k$ terms of the form $du_i$. Thus, the product rule for the Lie derivative gives us that $\mathfrak L_U\hat\alpha_j=(j+k)\hat\alpha_j$. 

    Except for the case $j=k=0$, we can divide $\mathfrak L_U\hat\alpha_j$ by $j+k$ to conclude that $\hat\alpha_j\in\P_s^-\Lambda^k(\R^{n+1})$. When $k\ge1$, this means that $\hat\alpha_j\in\P_s^-\Lambda^k(\R^{n+1})$ for all $j$, so $\hat\alpha\in\P_s^-\Lambda^k(\R^{n+1})$. When $k=0$, we have that $\hat\alpha_j\in\P_s^-\Lambda^0(\R^{n+1})$ for $j\ge1$ and that $\hat\alpha_0\in\P_0\Lambda^0(\R^{n+1})$, so $\hat\alpha\in\P_s^-\Lambda^0(\R^{n+1})+\P_0\Lambda^0(\R^{n+1})$.
  \end{proof}
\end{proposition}

\section{Characterizations of \texorpdfstring{$\ooP_s\Lambda^k(S^n)$}{ooPs\textLambda k(Sn)} and \texorpdfstring{$\ooP_s^-\Lambda^k(S^n)$}{ooPs-\textLambda k(Sn)}}\label{sec:oops}
In Section \ref{subsec:oop}, we showed that $\Phi^*$ is an isomorphism between $\oP_r\Lambda^k(T^n)$ and $\ooP_s\Lambda^k_e(S^n)$, where $s=2r+k$. To complete the proof of the correspondence in Theorem \ref{thm:TSiso} for the $\oP_r\Lambda^k(T^n)$ and $\oP_r^-\Lambda^k(T^n)$ spaces, we show in this section that $\ooP_s\Lambda^k_e(S^n)=u_N\P_{s-n-1}\Lambda^k_o(S^n)$ and $\ooP_s^-\Lambda^k_e(S^n)=u_N\P_{s-n-1}^-\Lambda^k_o(S^n)$.

This section is structured as follows. We first prove the corresponding results for forms on $\R^{n+1}$, showing that forms in $\ooP_s\Lambda^k(\R^{n+1})$ and $\ooP_s^-\Lambda^k(\R^{n+1})$ are divisible by $u_N$ and determining the quotient. However, unlike in the previous section, the desired results for forms on $S^n$ do not immediately follow from these results for forms on $\R^{n+1}$. The missing ingredient is showing that forms in $\ooP_s\Lambda^k(S^n)$ can be extended to forms in $\ooP_s\Lambda^k(\R^{n+1})$ and that forms in $\ooP_s^-\Lambda^k(S^n)$ can be extended to forms in $\ooP_s^-\Lambda^k(\R^{n+1})$. This claim is subtle, and in fact fails for exceptional values of $s$ and $k$.

\begin{example}\label{eg:vdv}
Using coordinates $(u,v)$ on $\R^2$, consider $v\,dv$. This form is in $\ooP_1\Lambda^1(S^1)$ as seen in Example \ref{eg:udu}, but it cannot be extended to a form in $\ooP_1\Lambda^1(\R^2)$. Indeed, as we will show in Proposition \ref{prop:oopr}, a form in $\ooP_s\Lambda^1(\R^2)$ must be divisible by $uv$ and thus have degree at least two.
\end{example}

The situation is simpler for the $\P_s^-\Lambda^k(S^n)$ spaces compared to the $\P_s\Lambda^k(S^n)$ spaces: we will show that forms in $\ooP_s^-\Lambda^k(S^n)$ can always be extended to forms in $\ooP_s^-\Lambda^k(\R^{n+1})$. We can then use the result that forms in $\ooP_s^-\Lambda^k(\R^{n+1})$ are divisible by $u_N$ to get our desired result that forms in $\ooP_s^-\Lambda^k(S^n)$ are divisible by $u_N$. By taking the Hodge dual of this result, we can show that forms in $\ooP_s\Lambda^k(S^n)$ are also divisible by $u_N$, without needing to show that these forms can be extended to forms in $\ooP_s\Lambda^k(\R^{n+1})$.

\subsection{Differential forms on $\R^{n+1}$}
\begin{proposition}\label{prop:oopr}
  We have that
  \begin{equation*}
    \ooP_s\Lambda^k(\R^{n+1})=u_N\P_{s-n-1}\Lambda^k(\R^{n+1}).
  \end{equation*}
  \begin{proof}
    If $\hat\alpha\in\ooP_s\Lambda^k(\R^{n+1})$, then each polynomial coefficient of $\hat\alpha$ vanishes when we set $u_i=0$, so each coefficient is divisible by $u_i$. Thus, $\hat\alpha$ is divisible by $u_N$. Conversely, if $\hat\alpha\in u_N\P_{s-n-1}\Lambda^k(\R^{n+1})$, then $\hat\alpha\in\ooP_s\Lambda^k(\R^{n+1})$ because $u_N=0$ whenever $u_i=0$.
  \end{proof}
\end{proposition}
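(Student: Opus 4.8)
The plan is to work coefficient-by-coefficient. On $\R^{n+1}$ the forms $du_I$ constitute a constant coframe, so writing $\hat\alpha=\sum_I p_I\,du_I$ with each $p_I$ a polynomial of degree at most $s$, the condition $\hat\alpha_u=0$ at a point $u$ is equivalent to the vanishing $p_I(u)=0$ for every $I$. This is the crucial simplification that the full-tensor vanishing condition provides, and it is exactly what reduces the problem to elementary algebra.

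First I would translate the geometric hypothesis into an algebraic one: requiring $\hat\alpha_u=0$ for all $u\in\Gamma_i$ says precisely that each coefficient $p_I$ vanishes identically on the hyperplane $\{u_i=0\}$. Next I would invoke the elementary fact that a polynomial vanishing on the coordinate hyperplane $\{u_i=0\}$ is divisible by $u_i$: substituting $u_i=0$ yields the zero polynomial in the remaining variables, which forces $u_i$ to divide $p_I$. Running this over all $i$ and using that $u_1,\dotsc,u_{n+1}$ are pairwise coprime irreducibles in the unique factorization domain $\R[u_1,\dotsc,u_{n+1}]$, I conclude that their product $u_N=u_1\dotsm u_{n+1}$ divides $p_I$, so $p_I=u_N q_I$ for some polynomial $q_I$. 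A degree count, using $\deg p_I\le s$ and $\deg u_N=n+1$, then gives $\deg q_I\le s-n-1$, yielding $\hat\alpha=u_N\sum_I q_I\,du_I\in u_N\P_{s-n-1}\Lambda^k(\R^{n+1})$.

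For the reverse inclusion, I would simply observe that if $\hat\alpha=u_N\hat\beta$ with $\hat\beta\in\P_{s-n-1}\Lambda^k(\R^{n+1})$, then each coefficient of $\hat\alpha$ has degree at most $(n+1)+(s-n-1)=s$, so $\hat\alpha\in\P_s\Lambda^k(\R^{n+1})$; and since $u_N$ vanishes at every point of $\Gamma$, we get $\hat\alpha_u=0$ there, that is, $\hat\alpha\in\ooP_s\Lambda^k(\R^{n+1})$.

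I do not expect a genuine obstacle here, as the content of the argument is the clean reduction to coefficients. The one point to handle with care is the distinction emphasized after Definition \ref{def:oop}: the hypothesis is that the tensor $\hat\alpha_u$ annihilates \emph{all} tangent vectors, not only those tangent to $\Gamma_i$. On $\R^{n+1}$, where the $du_I$ form a global constant frame, this strong condition is equivalent to the vanishing of all coefficients and the divisibility argument goes through verbatim. It is worth flagging that this is exactly where the analogous statement on $S^n$ becomes subtle, as in Example \ref{eg:udu}, which is why the proposition is stated on $\R^{n+1}$ and the sphere case must be treated separately.
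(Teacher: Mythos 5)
Your argument is correct and is essentially the paper's own proof: reduce to coefficients via the constant coframe $du_I$, observe that vanishing on $\{u_i=0\}$ forces divisibility by $u_i$ and hence by $u_N$, and check the reverse inclusion by noting $u_N$ vanishes on $\Gamma$. You simply spell out the coprimality and degree-count details that the paper leaves implicit.
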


We prove the corresponding claim for $\ooP_s^-\Lambda^k(\R^{n+1})$, but there is an exception due to the fact that $u_N\in\ooP_{n+1}^-\Lambda^0(\R^{n+1})$ but $1\notin\P_0^-\Lambda^0(\R^{n+1})$.
\begin{proposition}\label{prop:oopmr}
  For $k\ge1$, we have that
  \begin{equation}\label{eq:oopmr}
    \ooP_s^-\Lambda^k(\R^{n+1})=u_N\P_{s-n-1}^-\Lambda^k(\R^{n+1}).
  \end{equation}
  When $k=0$, assuming $s\ge n+1$, we have that
  \begin{equation*}
    \ooP_s^-\Lambda^0(\R^{n+1})=u_N\left(\P_{s-n-1}^-\Lambda^0(\R^{n+1})+\P_0\Lambda^0(\R^{n+1})\right).
  \end{equation*}
  If $s<n+1$, equation \eqref{eq:oopmr} holds vacuously.
  \begin{proof}
    Consider
    \begin{equation*}
      i_U\colon\P_{s-n-1}\Lambda^k(\R^{n+1})\to\P_{s-n}\Lambda^{k-1}(\R^{n+1}).
    \end{equation*}
    By Proposition \ref{prop:pmker}, proving the above claim is equivalent to showing that
    \begin{equation*}
      \ooP_s^-\Lambda^k(\R^{n+1})=u_N\ker i_U.
    \end{equation*}

    Let $\hat\alpha\in\ooP_s^-\Lambda^k(\R^{n+1})$. Because $\hat\alpha\in\P_s^-\Lambda^k(\R^{n+1})$, we know that $i_U\hat\alpha=0$. Because $\hat\alpha\in\ooP_s\Lambda^k(\R^{n+1})$, we know by Proposition \ref{prop:oopr} that $\hat\alpha=u_N\hat\beta$ for $\hat\beta\in\P_{s-n-1}\Lambda^k(\R^{n+1})$. It remains to show that $\hat\beta\in\ker i_U$. We have
    \begin{equation}\label{eq:iualpha}
      i_U\hat\alpha=i_U(u_N\hat\beta)=u_N(i_U\hat\beta).
    \end{equation}
    Thus $u_N(i_U\hat\beta)=0$. Dividing by $u_N$, we conclude that $i_U\hat\beta=0$, as desired.

    Conversely, let $\hat\alpha=u_N\hat\beta$ where $\hat\beta\in\ker i_U$. Proposition \ref{prop:oopr} tells us that $\hat\alpha\in\ooP_s\Lambda^k(\R^{n+1})$. Meanwhile, equation \eqref{eq:iualpha} gives us that $i_U\hat\alpha=0$, so Proposition \ref{prop:pmker} tells us that $\hat\alpha\in\P_s^-\Lambda^k(\R^{n+1})$. (We can rule out the $\P_0\Lambda^0(\R^{n+1})$ summand in Proposition \ref{prop:pmker} because $\hat\alpha$ is divisible by $u_N$.)
  \end{proof}
\end{proposition}

\subsection{Extending forms on $S^n$ to forms on $\R^{n+1}$}
We would like to use these results for $\R^{n+1}$ to show the corresponding results for the sphere. It is clear that the restriction of a form in $\ooP_s\Lambda^k(\R^{n+1})$ to $S^n$ must be in $\ooP_s\Lambda^k(S^n)$. It is less clear that forms in $\ooP_s\Lambda^k(S^n)$ can be extended to forms in $\ooP_s\Lambda^k(\R^{n+1})$; in fact, in exceptional cases, they cannot, as in Example \ref{eg:vdv} and more generally in Example \ref{eg:noextend}. However, as we will prove in this subsection, for the $\ooP_s^-\Lambda^k(S^n)$ spaces, this extension result always holds; there are no exceptional cases.

We begin with understanding the situation when $k=0$, with an additional parity hypothesis that we later remove without difficulty.

\begin{lemma}\label{lemma:extendp}
  Let $p\in\ooP_s\Lambda^0(S^n)$. By definition, $p$ has an extension $\hat p'\in\P_s\Lambda^0(\R^{n+1})$. Assume that the total degree of every term of $\hat p'$ has the same parity, either all even or all odd. Then $p$ has an extension $\hat p\in\ooP_s\Lambda^0(\R^{n+1})$.
  \begin{proof}
    The main idea of the proof is that we obtain $\hat p$ by homogenizing $\hat p'$. That is, we multiply each term of $\hat p'$ by an appropriate power of $r^2:=u_1^2+\dotsb+u_{n+1}^2$.

    More precisely, we construct $\hat p$ as follows. Without loss of generality, assume that $s=\deg\hat p'$. We decompose $\hat p'$ into homogeneous components, so we have either of the two cases:
    \begin{align*}
      \hat p'&=\sum_{j=0}^{s/2}\hat p'_{2j},\text{ or}&\hat p'&=\sum_{j=0}^{(s-1)/2}\hat p'_{2j+1},
    \end{align*}
    where $\hat p'_j$ is a homogeneous polynomial of degree $j$. We multiply each term of $\hat p'$ by an appropriate power of $r^2$ in order to make the polynomial homogeneous. That is, set
    \begin{align*}
      \hat p&=\sum_{j=0}^{s/2}\hat p'_{2j}\left(r^2\right)^{s/2-j},\text{ or}&\hat p&=\sum_{j=0}^{(s-1)/2}\hat p'_{2j+1}\left(r^2\right)^{(s-1)/2-j}.
    \end{align*}

    Since $r^2=1$ on $S^n$, we see that the restriction of $\hat p$ to $S^n$ is the same as the restriction of $\hat p'$, namely $p$. We also see that $\hat p$ is a homogeneous polynomial of degree $s$. Consequently, if $\hat p(u)=0$, then $\hat p(\lambda u)=0$ for any real number $\lambda$. For any $u\in\Gamma_i\cap S^n$, we have $\hat p(u)=p(u)=0$ because $p\in\ooP_s\Lambda^0(S^n)$. Since every point in $\Gamma_i$ is of the form $\lambda u$ for $u\in\Gamma_i\cap S^n$, we conclude that $\hat p\in\ooP_s\Lambda^0(\R^{n+1})$, as desired.
  \end{proof}
\end{lemma}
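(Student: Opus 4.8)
The plan is to produce $\hat p$ by \emph{homogenizing} $\hat p'$: I multiply each homogeneous piece of $\hat p'$ by a suitable power of $r^2:=u_1^2+\dotsb+u_{n+1}^2$ so that the result is homogeneous of a single total degree. Because $r^2=1$ on $S^n$, this operation leaves the restriction to the sphere unchanged, so the homogenized polynomial $\hat p$ remains an extension of $p$. The reason homogeneity is worth achieving is that the zero set of a homogeneous polynomial is a union of rays through the origin, and this is exactly what lets me promote the vanishing of $p$ on $S^n\cap\Gamma$ to the vanishing of $\hat p$ on all of $\Gamma$.

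Concretely, I would first reduce to the case $s=\deg\hat p'$. This is harmless, since $\ooP_{\deg\hat p'}\Lambda^0(\R^{n+1})\subseteq\ooP_s\Lambda^0(\R^{n+1})$, and it is in fact needed: it forces $s$ to share the common parity of the terms of $\hat p'$, so that $s-j$ is even for every homogeneous component $\hat p'_j$ that actually appears. Writing $\hat p'=\sum_j\hat p'_j$ in homogeneous components, I then set
\[
  \hat p:=\sum_j\hat p'_j\,(r^2)^{(s-j)/2},
\]
which is a bona fide polynomial (all exponents are nonnegative integers), is homogeneous of degree $s$, and agrees with $\hat p'$, hence with $p$, on $S^n$.

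It then remains to verify that $\hat p$ vanishes identically on each hyperplane $\Gamma_i$. Homogeneity gives $\hat p(\lambda u)=\lambda^s\hat p(u)$, so $\hat p$ vanishes along an entire ray as soon as it vanishes at one point of that ray. For $u\in\Gamma_i\cap S^n$ I have $\hat p(u)=\hat p'(u)=p(u)=0$, using that $p\in\ooP_s\Lambda^0(S^n)$. Since every nonzero point of $\Gamma_i$ is a scalar multiple of a point of $\Gamma_i\cap S^n$ (and the origin is handled automatically for $s\ge1$), I conclude that $\hat p\equiv0$ on $\Gamma_i$, and therefore on all of $\Gamma$. Hence $\hat p\in\ooP_s\Lambda^0(\R^{n+1})$, as required.

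The heart of the matter---and the source of the parity hypothesis---is the homogenization step. To merge homogeneous components of differing degrees into a single degree, I can only multiply by powers of a fixed polynomial, and the one multiplier that is both invisible on $S^n$ and of minimal degree is $r^2$, which has degree $2$. Consequently only components whose degrees differ by an even amount can be reconciled, and a mixed-parity $\hat p'$ simply cannot be homogenized to a single degree by polynomials (one would be forced to use the non-polynomial factor $r$). This is precisely the obstruction that the same-parity assumption sidesteps, and it explains why the general $k=0$ statement must later be recovered by treating the even and odd parts of $p$ separately.
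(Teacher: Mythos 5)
Your proposal is correct and follows essentially the same route as the paper: homogenize $\hat p'$ by multiplying each homogeneous component by the appropriate power of $r^2$, note that this does not change the restriction to $S^n$, and use homogeneity to propagate the vanishing of $p$ on $\Gamma_i\cap S^n$ along rays to all of $\Gamma_i$. Your added remarks---that the reduction to $s=\deg\hat p'$ is what guarantees the exponents $(s-j)/2$ are nonnegative integers, and that the parity hypothesis is exactly what makes homogenization by powers of $r^2$ possible---are accurate glosses on the paper's argument rather than a departure from it.
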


We now prove this extension result for $\ooP_s^-\Lambda^k(S^n)$, again with an additional parity hypothesis that we remove later.
\begin{lemma}\label{lemma:extendalpha}
    Let $\alpha\in\ooP_s^-\Lambda^k(S^n)$. By definition, $\alpha$ has an extension $\hat\alpha'\in\P_s^-\Lambda^k(\R^{n+1})$. Assume that the total degree of every term of every polynomial coefficient of $\hat\alpha'$ has the same parity, either all even or all odd. Then $\alpha$ has an extension $\hat\alpha\in\ooP_s^-\Lambda^k(\R^{n+1})$.
  \begin{proof}
    Let $u\in\Gamma_i\cap S^n$, and consider the antisymmetric tensor $\hat\alpha'_u$. We show that $\hat\alpha'_u=0$. Indeed, for any vectors $V_1,\dotsc,V_k$ tangent to the sphere, we have $\hat\alpha'_u(V_1,\dotsc,V_k)=\alpha_u(V_1,\dotsc,V_k)=0$ because $\alpha\in\ooP_s\Lambda^k(S^n)$. Meanwhile, $U$ is the normal vector to the sphere, and $i_U\hat\alpha'_u=0$ because $\hat\alpha'\in\P_s^-\Lambda^k(\R^{n+1})$. We conclude by multilinearity that $\hat\alpha'_u=0$.

    Thus, all of the polynomial coefficients of $\hat\alpha'$ vanish on $\Gamma_i\cap S^n$ for all $i$.
    By Lemma \ref{lemma:extendp}, we can homogenize these polynomial coefficients to obtain polynomials that vanish on all of $\Gamma_i$ and have the same values on $S^n$, giving us an extension $\hat\alpha\in\ooP_s\Lambda^k(\R^{n+1})$.

    To show that $\hat\alpha\in\P_s^-\Lambda^k(\R^{n+1})$, for $u\in S^n$, we have that $i_U\hat\alpha_u=i_U\hat\alpha'_u=0$. Since $\hat\alpha$ is homogeneous, we conclude that $i_U\hat\alpha_u=0$ for all $u\in\R^{n+1}$. We then conclude that $\hat\alpha\in\ooP_s^-\Lambda^k(\R^{n+1})$ by Proposition \ref{prop:pmker}. (We can rule out the $\P_0\Lambda^0(\R^{n+1})$ summand of Proposition \ref{prop:pmker} using the fact that $\hat\alpha$ is homogeneous.)
  \end{proof}
\end{lemma}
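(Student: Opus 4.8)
The plan is to produce the desired extension $\hat\alpha$ by homogenizing the polynomial coefficients of the given extension $\hat\alpha'$, in the manner of Lemma \ref{lemma:extendp}, so that the resulting form has homogeneous coefficients that agree with those of $\hat\alpha'$ on $S^n$ while vanishing on all of each hyperplane $\Gamma_i$. The first and most important step is to observe that $\hat\alpha'$ already vanishes as a full antisymmetric tensor at every point $u\in\Gamma_i\cap S^n$, not merely on vectors tangent to the sphere; this is the crux, and it is where the two hypotheses on $\alpha$ and on $\hat\alpha'$ must be used together.

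To establish that $\hat\alpha'_u=0$ for $u\in\Gamma_i\cap S^n$, I would decompose $T_u\R^{n+1}=T_uS^n\oplus\R\,U$, where $U$ is the radial normal direction. By multilinearity and antisymmetry it suffices to evaluate $\hat\alpha'_u$ on lists of vectors each of which is either tangent to $S^n$ or equal to $U$. On an all-tangent list, $\hat\alpha'_u$ agrees with $\alpha_u$, which vanishes because $\alpha\in\ooP_s^-\Lambda^k(S^n)$ and $u\in\Gamma\cap S^n$. On a list containing $U$, antisymmetry moves $U$ to the first slot and the value becomes $(i_U\hat\alpha')_u$ evaluated on the remaining vectors, which vanishes because $\hat\alpha'\in\P_s^-\Lambda^k(\R^{n+1})=\ker i_U$ by Proposition \ref{prop:pmker} (with $i_U$ annihilating $0$-forms in the case $k=0$). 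Hence every coefficient of $\hat\alpha'$ in the $du_I$ basis vanishes on $\Gamma_i\cap S^n$, for every $i$.

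With each coefficient vanishing on $\Gamma\cap S^n$ and satisfying the same-parity hypothesis, I would apply Lemma \ref{lemma:extendp} coefficient by coefficient, multiplying terms by powers of $r^2=u_1^2+\dotsb+u_{n+1}^2$ to homogenize each coefficient to a single degree $s$. Since $r^2=1$ on the sphere, the homogenized coefficients agree with the originals on $S^n$, so the resulting $\hat\alpha$ restricts to $\alpha$; and since a homogeneous polynomial vanishing on $\Gamma_i\cap S^n$ vanishes on all of $\Gamma_i$ by scaling, $\hat\alpha$ lies in $\ooP_s\Lambda^k(\R^{n+1})$. It then remains to confirm $\hat\alpha\in\P_s^-\Lambda^k(\R^{n+1})$. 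Because the coefficients of $\hat\alpha$ and $\hat\alpha'$ agree on $S^n$, we have $\hat\alpha_u=\hat\alpha'_u$ as tensors for every $u\in S^n$, so $i_U\hat\alpha_u=i_U\hat\alpha'_u=0$ there; the coefficients of $i_U\hat\alpha$ are homogeneous, so a homogeneous polynomial vanishing on $S^n$ vanishes identically, giving $i_U\hat\alpha=0$ on all of $\R^{n+1}$. Proposition \ref{prop:pmker} then yields $\hat\alpha\in\P_s^-\Lambda^k(\R^{n+1})$, with the stray $\P_0\Lambda^0(\R^{n+1})$ summand excluded by homogeneity. I expect the tangent-plus-radial decomposition of the second step to be the main obstacle, since it is the only place where the pointwise sphere condition and the $i_U$-kernel condition must be merged to upgrade tangential vanishing to full-tensor vanishing.
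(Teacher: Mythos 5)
Your proposal is correct and follows essentially the same route as the paper's proof: establish $\hat\alpha'_u=0$ at points of $\Gamma\cap S^n$ by combining tangential vanishing with $i_U\hat\alpha'=0$, homogenize the coefficients via Lemma \ref{lemma:extendp}, and then use homogeneity of $i_U\hat\alpha$ to propagate $i_U\hat\alpha=0$ from $S^n$ to all of $\R^{n+1}$ before invoking Proposition \ref{prop:pmker}. The only difference is that you spell out the multilinearity step via the explicit decomposition $T_u\R^{n+1}=T_uS^n\oplus\R\,U$, which the paper leaves implicit.
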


We now remove the parity hypothesis.

\begin{proposition}\label{prop:extendalpha}
  Any differential form in $\ooP_s^-\Lambda^k(S^n)$ can be extended to a differential form in $\ooP_s^-\Lambda^k(\R^{n+1})$.
  \begin{proof}
    Let $\alpha\in\ooP_s^-\Lambda^k(S^n)$. Let
    \begin{align*}
      \beta_u&=\tfrac12\left(\alpha_u+\alpha_{-u}\right),\\\gamma_u&=\tfrac12\left(\alpha_u-\alpha_{-u}\right).
    \end{align*}
    Then $\beta$ and $\gamma$ satisfy the hypotheses of Lemma \ref{lemma:extendalpha}, because if $\hat\alpha'\in\P_s^-\Lambda^k(\R^{n+1})$ is an extension of $\alpha$, then $\hat\beta'_u=\frac12\left(\hat\alpha'_u+\hat\alpha'_{-u}\right)$ and $\hat\gamma'_u=\frac12\left(\hat\alpha'_u-\hat\alpha'_{-u}\right)$ are extensions of $\beta$ and $\gamma$ with the required parity property. Thus, we can extend $\beta$ and $\gamma$ to $\hat\beta$ and $\hat\gamma$ in $\ooP_s^-\Lambda^k(\R^{n+1})$ and set $\hat\alpha=\hat\beta+\hat\gamma$.
  \end{proof}
\end{proposition}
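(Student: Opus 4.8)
The plan is to reduce to Lemma~\ref{lemma:extendalpha} by splitting an arbitrary $\alpha\in\ooP_s^-\Lambda^k(S^n)$ into two summands, each of whose polynomial coefficients has terms of a single total-degree parity. The natural tool is the antipodal map $A\colon\R^{n+1}\to\R^{n+1}$, $A(u)=-u$, which is the composition $R_1\circ\dotsb\circ R_{n+1}$ of all the coordinate reflections of Definition~\ref{def:evenodd}. I would decompose $\alpha$ into the $\pm1$ eigenspaces of the involution $A^*$, writing $\alpha=\alpha_++\alpha_-$ with $\alpha_\pm=\tfrac12\left(\alpha\pm A^*\alpha\right)$.

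First I would check that $\ooP_s^-\Lambda^k(S^n)$ is invariant under $A^*$, so that $\alpha_\pm$ again lie in this space. There are three defining conditions to verify. The pullback $A^*$ preserves polynomial degree, so it maps $\P_s\Lambda^k$ to itself. It preserves the vanishing condition on $\Gamma$ because $A$ maps $\Gamma$ to itself and $S^n$ to itself: if $\alpha_u=0$ for all $u\in\Gamma\cap S^n$, then $(A^*\alpha)_u=0$ there as well, since $-u$ also lies in $\Gamma\cap S^n$. Finally, $A^*$ preserves $\P_s^-\Lambda^k$ because the radial field is fixed by $A$, so $i_U$ commutes with $A^*$, exactly as $i_U$ commutes with each $R_i^*$ in the proof of Proposition~\ref{prop:starevenodd}. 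Hence $\alpha_\pm\in\ooP_s^-\Lambda^k(S^n)$, and $\alpha=\alpha_++\alpha_-$.

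Next I would identify the parity of each summand. A monomial term $u_1^{\mu_1}\dotsm u_{n+1}^{\mu_{n+1}}\,du_I$ whose coefficient has total degree $d=\mu_1+\dotsb+\mu_{n+1}$ pulls back under $A^*$ to $(-1)^{d+k}$ times itself, since $A^*(du_i)=-du_i$ and $A^*u_i=-u_i$. Therefore $\alpha_+$ consists exactly of the terms with $d\equiv k\pmod2$ and $\alpha_-$ of the terms with $d\equiv k+1\pmod2$; within each summand all coefficient terms share a single total-degree parity. Projecting any extension in $\P_s^-\Lambda^k(\R^{n+1})$ onto the corresponding $A^*$-eigenspace on $\R^{n+1}$ (which again preserves $\P_s^-\Lambda^k(\R^{n+1})$, by the same reflection argument) then furnishes for each of $\alpha_+$ and $\alpha_-$ an extension of a single parity, so the hypothesis of Lemma~\ref{lemma:extendalpha} is met.

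I would then apply Lemma~\ref{lemma:extendalpha} to $\alpha_+$ and $\alpha_-$ separately, obtaining extensions $\hat\alpha_+,\hat\alpha_-\in\ooP_s^-\Lambda^k(\R^{n+1})$, and set $\hat\alpha=\hat\alpha_++\hat\alpha_-$; since $\ooP_s^-\Lambda^k(\R^{n+1})$ is a linear space and $\hat\alpha$ restricts to $\alpha_++\alpha_-=\alpha$, this completes the proof. The only step requiring genuine care --- the main obstacle --- is the invariance of the $\P^-$ condition under $A^*$, that is, that $i_U$ commutes with the antipodal pullback; once this is in hand, everything else is the bookkeeping of the eigenspace decomposition.
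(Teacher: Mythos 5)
Your proposal is correct and follows essentially the same route as the paper: both decompose $\alpha$ into its two eigencomponents under the antipodal involution (your $\tfrac12(\alpha\pm A^*\alpha)$ is the paper's $\tfrac12(\alpha_u\pm\alpha_{-u})$ up to the sign $(-1)^k$ from pulling back the $du_i$) and then apply Lemma \ref{lemma:extendalpha} to each piece. You in fact supply more detail than the paper does, in particular the verification that $A^*$ preserves $\ooP_s^-\Lambda^k$ and that projecting an extension onto an $A^*$-eigenspace yields the single-parity extension the lemma requires.
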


\subsection{Differential forms on $S^n$}
Now that we have this extension result, we can use the fact that forms in $\ooP_s^-\Lambda^k(\R^{n+1})$ are divisible by $u_N$ to conclude that the same holds for forms in $\ooP_s^-\Lambda^k(S^n)$.

\begin{proposition}\label{prop:oopm}
  If $k\ge1$, then
  \begin{equation}\label{eq:oopm}
    \ooP_s^-\Lambda^k(S^n)=u_N\P_{s-n-1}^-\Lambda^k(S^n).
  \end{equation}
  For $k=0$, assuming $s\ge n+1$, we have
  \begin{equation*}
    \ooP_s^-\Lambda^k(S^n)=u_N\left(\P_{s-n-1}^-\Lambda^0(S^n)+\P_0\Lambda^0(S^n)\right)
  \end{equation*}
  If $s<n+1$, then equation \eqref{eq:oopm} holds vacuously.

  Moreover, $\P_0\Lambda^0(S^n)\subseteq\P_2^-\Lambda^0(S^n)$, so equation \eqref{eq:oopm} holds even if $k=0$ as long as $s\ge n+3$.
  \begin{proof}
    Proposition \ref{prop:oopmr} gives the corresponding result for forms on $\R^{n+1}$. Restricting both sides to $S^n$ using Proposition \ref{prop:extendalpha}, we obtain the desired result.

    To show that $\P_0\Lambda^0(S^n)\subseteq\P_2^-\Lambda^0(S^n)$, observe that $i_U\nu=u_1^2+\dotsb+u_{n+1}^2$ is in $\P_2^-\Lambda^0(\R^{n+1})$ by definition and restricts to the constant function $1$ on $S^n$.
  \end{proof}
\end{proposition}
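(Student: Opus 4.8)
The plan is to descend the corresponding statement on $\R^{n+1}$, namely Proposition \ref{prop:oopmr}, to the sphere by restricting both sides. The identity $\ooP_s^-\Lambda^k(\R^{n+1})=u_N\P_{s-n-1}^-\Lambda^k(\R^{n+1})$ (for $k\ge1$) and its $k=0$ analogue already package the divisibility by $u_N$ that we want; the only work remaining is to check that the restriction map $\Lambda^k(\R^{n+1})\to\Lambda^k(S^n)$ carries each side of the ambient identity \emph{onto} the corresponding space on $S^n$.

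The right-hand side is the easy direction: restriction commutes with multiplication by the scalar function $u_N$, since $u_N$ restricts to $u_N$ on the sphere, and by definition $\P_{s-n-1}^-\Lambda^k(S^n)$ is precisely the restriction of $\P_{s-n-1}^-\Lambda^k(\R^{n+1})$. Hence the restriction of $u_N\P_{s-n-1}^-\Lambda^k(\R^{n+1})$ is exactly $u_N\P_{s-n-1}^-\Lambda^k(S^n)$. For the left-hand side, the subtle direction, I would invoke Proposition \ref{prop:extendalpha}. It is immediate that the restriction of any form in $\ooP_s^-\Lambda^k(\R^{n+1})$ lies in $\ooP_s^-\Lambda^k(S^n)$; the content is the converse, that every form in $\ooP_s^-\Lambda^k(S^n)$ is the restriction of some form in $\ooP_s^-\Lambda^k(\R^{n+1})$, which is exactly the extension result already established. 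Consequently the restriction of $\ooP_s^-\Lambda^k(\R^{n+1})$ is all of $\ooP_s^-\Lambda^k(S^n)$, and restricting Proposition \ref{prop:oopmr} yields the stated equalities, carrying the extra $\P_0\Lambda^0$ summand along unchanged in the $k=0$ case.

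For the supplementary claim $\P_0\Lambda^0(S^n)\subseteq\P_2^-\Lambda^0(S^n)$, I would exhibit the constant function $1$ explicitly as an element of $\P_2^-\Lambda^0(S^n)$. Since $\nu=\sum_i u_i\,du_i\in\P_1\Lambda^1(\R^{n+1})$, its interior product $i_U\nu=\sum_i u_i^2$ lies in $\P_2^-\Lambda^0(\R^{n+1})$ by definition, and $\sum_i u_i^2$ restricts to the constant $1$ on $S^n$. This containment absorbs the extra summand whenever $s-n-1\ge2$, that is $s\ge n+3$, recovering \eqref{eq:oopm} in the $k=0$ case as well.

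I would identify the genuine obstacle as having already been overcome in Proposition \ref{prop:extendalpha}: without the guarantee that sphere forms extend while remaining in the doubly-ringed space, the restriction of $\ooP_s^-\Lambda^k(\R^{n+1})$ could be a proper subspace of $\ooP_s^-\Lambda^k(S^n)$, and the descent argument would collapse. Given that extension result, the present proposition is a short bookkeeping exercise in how restriction interacts with the product structure and with the definitions of the sphere spaces.
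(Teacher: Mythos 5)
Your proposal is correct and follows the paper's proof exactly: restrict the ambient identity of Proposition \ref{prop:oopmr} to the sphere, using Proposition \ref{prop:extendalpha} for surjectivity of restriction onto $\ooP_s^-\Lambda^k(S^n)$, and exhibit $i_U\nu=\sum_iu_i^2$ to show $\P_0\Lambda^0(S^n)\subseteq\P_2^-\Lambda^0(S^n)$. You have merely spelled out the bookkeeping (restriction commuting with multiplication by $u_N$, the easy containment of restrictions in $\ooP_s^-\Lambda^k(S^n)$) that the paper leaves implicit.
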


We take the even part of the above result.

\begin{corollary}\label{cor:oopmeven}
  For all $k$ and $s$, we have
  \begin{equation*}
    \ooP_s^-\Lambda^k_e(S^n)=u_N\P_{s-n-1}^-\Lambda^k_o(S^n).
  \end{equation*}
  \begin{proof}
    We use Propositions \ref{prop:restrictevenodd} and \ref{prop:unevenodd} to take the even part of the equations in Proposition \ref{prop:oopm}.
    The result holds even when $k=0$ because constant functions are even, so the odd part of $\P_0\Lambda^0(S^n)$ is zero.
  \end{proof}
\end{corollary}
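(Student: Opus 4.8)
The plan is to derive the even-form identity directly from Proposition \ref{prop:oopm} by passing to even parts, using only the bookkeeping established in Propositions \ref{prop:restrictevenodd} and \ref{prop:unevenodd}. The single structural input needed beyond Proposition \ref{prop:oopm} is that the spaces appearing there are invariant under the coordinate reflections $R_i$: reflections map $S^n$ to itself, preserve polynomial degree, commute with $i_U$, and permute the hyperplanes $\Gamma_i$, so both $\ooP_s^-\Lambda^k(S^n)$ and $\P_{s-n-1}^-\Lambda^k(S^n)$ are closed under $R_i^*$ and hence under the even- and odd-part projections defined in equations \eqref{eq:even} and \eqref{eq:odd}. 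Because of this, intersecting such a space with $\Lambda^k_e(S^n)$ picks out exactly its even forms, which is the meaning of the subscript $e$.

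Concretely, I would intersect both sides of the equality in Proposition \ref{prop:oopm} with $\Lambda^k_e(S^n)$. The left-hand side becomes $\ooP_s^-\Lambda^k_e(S^n)$ by definition. For the right-hand side, suppose $u_N\beta$ is even, with $\beta\in\P_{s-n-1}^-\Lambda^k(S^n)$. Since $u_N\beta$ equals its own even part, Proposition \ref{prop:unevenodd} gives $u_N\beta=(u_N\beta)_e=u_N\beta_o$, where $\beta_o$ is odd and, by reflection invariance, still lies in $\P_{s-n-1}^-\Lambda^k(S^n)$; thus $u_N\beta\in u_N\P_{s-n-1}^-\Lambda^k_o(S^n)$. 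Conversely, the same proposition shows that every element $u_N\beta_o$ with $\beta_o$ odd is itself even. Hence the even forms in $u_N\P_{s-n-1}^-\Lambda^k(S^n)$ are exactly $u_N\P_{s-n-1}^-\Lambda^k_o(S^n)$, and for $k\ge1$ the identity follows at once.

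The one place requiring care, and the main obstacle, is the exceptional $k=0$ case, where Proposition \ref{prop:oopm} produces the extra summand $\P_0\Lambda^0(S^n)$, giving $\ooP_s^-\Lambda^0(S^n)=u_N\bigl(\P_{s-n-1}^-\Lambda^0(S^n)+\P_0\Lambda^0(S^n)\bigr)$. Running the same intersection argument, I must identify the odd part of $\P_{s-n-1}^-\Lambda^0(S^n)+\P_0\Lambda^0(S^n)$. The key observation is that $\P_0\Lambda^0(S^n)$ consists of constant functions, which are even and therefore contribute nothing to the odd part, so the odd part of the sum is simply $\P_{s-n-1}^-\Lambda^0_o(S^n)$. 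In other words, the stray constant summand is annihilated precisely when we project onto even forms, which is exactly what makes the conclusion uniform in $k$: the $k=0$ exception in Proposition \ref{prop:oopm} disappears after taking even parts, yielding $\ooP_s^-\Lambda^k_e(S^n)=u_N\P_{s-n-1}^-\Lambda^k_o(S^n)$ for all $k$ and $s$.
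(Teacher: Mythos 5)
Your proposal is correct and follows the same route as the paper: take even parts of both sides of Proposition \ref{prop:oopm}, use the fact that multiplication by $u_N$ swaps parity (Proposition \ref{prop:unevenodd}) to turn the even part of $u_N\P_{s-n-1}^-\Lambda^k(S^n)$ into $u_N\P_{s-n-1}^-\Lambda^k_o(S^n)$, and dispose of the $k=0$ summand because constants are even and so have vanishing odd part. The only difference is that you make explicit the reflection-invariance of $\P_{s-n-1}^-\Lambda^k(S^n)$ needed for the even/odd projections to stay inside the space, which the paper leaves implicit.
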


We now use Hodge duality to prove the corresponding result for $\ooP_s\Lambda^k(S^n)$.

\begin{lemma}\label{lemma:oopmhodgesphere}
  We have
  \begin{equation*}
    \ooP_s^-\Lambda^k(S^n)=*_{S^n}\ooP_{s-1}\Lambda^{n-k}(S^n).
  \end{equation*}
  \begin{proof}
    Proposition \ref{prop:pmhodgesphere} gives us $\P_s^-\Lambda^k(S^n)=*_{S^n}\P_{s-1}\Lambda^{n-k}(S^n)$, and we have $\alpha_u=0$ if and only if $*_{S^n}\alpha_u=0$.
  \end{proof}
\end{lemma}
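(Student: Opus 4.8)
The plan is to intersect the $\P^-$-duality identity of Proposition \ref{prop:pmhodgesphere} with the pointwise-vanishing condition that distinguishes the $\ooP$ spaces from the $\P^-$ spaces. Since Proposition \ref{prop:pmhodgesphere} already supplies the ambient equality $\P_s^-\Lambda^k(S^n)=*_{S^n}\P_{s-1}\Lambda^{n-k}(S^n)$, the only additional content of this lemma is tracking how the defining condition of Definition \ref{def:oop}, namely that $\alpha_u=0$ for all $u\in S^n\cap\Gamma$, behaves under the Hodge star $*_{S^n}$.

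The key observation I would record first is that for each fixed $u\in S^n$, the operator $*_{S^n}\colon\Lambda^k(T_uS^n)\to\Lambda^{n-k}(T_uS^n)$ is a linear isomorphism of the fibers; this is immediate from $*_{S^n}*_{S^n}=(-1)^{k(n-k)}\mathrm{id}$. Consequently, for any $\gamma\in\Lambda^{n-k}(S^n)$ and any point $u$, the tensor $\gamma_u$ vanishes if and only if $(*_{S^n}\gamma)_u$ vanishes. This fiberwise equivalence is the heart of the argument, and it is exactly the statement invoked in the cited proof.

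I would then unwind the definitions. By Definition \ref{def:oop}, a form $\gamma\in\P_{s-1}\Lambda^{n-k}(S^n)$ lies in $\ooP_{s-1}\Lambda^{n-k}(S^n)$ precisely when $\gamma_u=0$ for every $u\in S^n\cap\Gamma$. Applying the fiberwise equivalence at each such $u$, this holds if and only if $(*_{S^n}\gamma)_u=0$ for every $u\in S^n\cap\Gamma$. Combining this with the fact that $*_{S^n}\gamma$ ranges over all of $\P_s^-\Lambda^k(S^n)$ as $\gamma$ ranges over $\P_{s-1}\Lambda^{n-k}(S^n)$, by Proposition \ref{prop:pmhodgesphere}, this condition says precisely that $*_{S^n}\gamma\in\ooP_s^-\Lambda^k(S^n)$. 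Taking the image under $*_{S^n}$ of both characterizations yields the claimed equality. There is no serious obstacle here; the only point requiring care is confirming that the fiberwise Hodge star is a genuine isomorphism, so that vanishing is preserved in \emph{both} directions rather than merely forward, and this follows at once from its invertibility.
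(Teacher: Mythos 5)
Your proposal is correct and is essentially the paper's own proof: both rest on the ambient equality $\P_s^-\Lambda^k(S^n)=*_{S^n}\P_{s-1}\Lambda^{n-k}(S^n)$ from Proposition \ref{prop:pmhodgesphere} together with the fiberwise fact that $\gamma_u=0$ if and only if $(*_{S^n}\gamma)_u=0$, which transfers the vanishing condition of Definition \ref{def:oop} across the Hodge star. You have simply unwound the definitions in more detail than the paper's one-line argument.
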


\begin{proposition}\label{prop:extendoop}
  If $k<n$, then
  \begin{equation}\label{eq:extendoop}
    \ooP_s\Lambda^k(S^n)=u_N\P_{s-n-1}\Lambda^k(S^n),
  \end{equation}

  If $k=n$, assuming $s\ge n$, we have
  \begin{equation*}
    \ooP_s\Lambda^n(S^n)=u_N\left(\P_{s-n-1}\Lambda^n(S^n)+\R\cdot\vol_{S^n}\right)
  \end{equation*}
  If $s<n$, then equation \eqref{eq:extendoop} holds vacuously.
  
  Moreover, $\vol_{S^n}\in\P_1\Lambda^n(\R^{n+1})$, so equation \eqref{eq:extendoop} holds even if $k=n$ as long as $s\ge n+2$.

  \begin{proof}
    When $k<n$ or $s<n$, Proposition \ref{prop:oopm} gives us
    \begin{equation*}
      \ooP_{s+1}^-\Lambda^{n-k}(S^n)=u_N\P_{s-n}^-\Lambda^{n-k}(S^n).
    \end{equation*}
    Lemma \ref{lemma:oopmhodgesphere} and Proposition \ref{prop:pmhodgesphere} characterize the above spaces as images under $*_{S^n}$, giving us
    \begin{equation*}
      *_{S^n}\ooP_s\Lambda^k(S^n)=u_N\left(*_{S^n}\P_{s-n-1}\Lambda^k(S^n)\right)
    \end{equation*}
    Applying $*_{S^n}$ to both sides and using the fact that $*_{S^n}(*_{S^n}\alpha)=(-1)^{k(n-k)}\alpha$, we obtain
    \begin{equation*}
      \ooP_s\Lambda^k(S^n)=u_N\P_{s-n-1}\Lambda^k(S^n).
    \end{equation*}

    Meanwhile, in the case $k=n$ and $s\ge n$, Proposition \ref{prop:oopm} instead gives us
    \begin{equation*}
      \begin{split}
        \ooP_{s+1}^-\Lambda^0(S^n)&=u_N\left(\P_{s-n}^-\Lambda^0(S^n)+\P_0\Lambda^0(S^n)\right)\\
        &=u_N\left(\P_{s-n}^-\Lambda^0(S^n)+\mathbb R\cdot1\right)
      \end{split}
    \end{equation*}
    Taking the Hodge star as above, we instead obtain
    \begin{equation*}
      \ooP_s\Lambda^n(S^n)=u_N\left(\P_{s-n-1}\Lambda^n(S^n)+\mathbb R\cdot\vol_{S^n}\right).
    \end{equation*}

    To see that $\vol_{S^n}\in\P_1\Lambda^n(S^n)$, we have that $\vol_{S^n}=*_{S^n}1$, which means that $\vol_{S^n}$ is the restriction to $S^n$ of $*_{\R^{n+1}}\nu\in\P_1\Lambda^n(\R^{n+1})$.
  \end{proof}
\end{proposition}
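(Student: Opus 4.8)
The plan is to deduce the statement for $\ooP_s\Lambda^k(S^n)$ from the already-proved statement for $\ooP_s^-\Lambda^k(S^n)$ in Proposition \ref{prop:oopm} by applying the Hodge star on the sphere. This indirect route is the crux of the matter: unlike in the $\ooP^-$ case, a form in $\ooP_s\Lambda^k(S^n)$ need not extend to a form in $\ooP_s\Lambda^k(\R^{n+1})$ — for instance $v\,dv\in\ooP_1\Lambda^1(S^1)$ has no such extension, as noted in Example \ref{eg:udu} — so the extension-and-homogenization argument of Proposition \ref{prop:extendalpha} is simply unavailable here. Hodge duality converts divisibility by $u_N$ in complementary form degree $n-k$ into divisibility by $u_N$ in degree $k$, bypassing extension entirely; this is the key idea and also the step where the most care is required.

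Concretely, I would first assume $k<n$, so that $n-k\ge1$ and Proposition \ref{prop:oopm} applies in form degree $n-k$ and polynomial degree $s+1$, giving
\[
  \ooP_{s+1}^-\Lambda^{n-k}(S^n)=u_N\,\P_{s-n}^-\Lambda^{n-k}(S^n).
\]
Next I would rewrite each side as an image of $*_{S^n}$: Lemma \ref{lemma:oopmhodgesphere} identifies the left-hand side with $*_{S^n}\ooP_s\Lambda^k(S^n)$, while Proposition \ref{prop:pmhodgesphere} identifies $\P_{s-n}^-\Lambda^{n-k}(S^n)$ with $*_{S^n}\P_{s-n-1}\Lambda^k(S^n)$. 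Since $u_N$ is a scalar function and $*_{S^n}$ is linear over functions, the factor $u_N$ passes through the Hodge star, and the equation becomes
\[
  *_{S^n}\ooP_s\Lambda^k(S^n)=u_N\,*_{S^n}\P_{s-n-1}\Lambda^k(S^n).
\]
Applying $*_{S^n}$ once more and invoking $*_{S^n}*_{S^n}=(-1)^{k(n-k)}$ on $k$-forms — whose sign is irrelevant at the level of spaces — collapses the double star on each side and yields equation \eqref{eq:extendoop}.

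For $k=n$ I would run the same computation, but now Proposition \ref{prop:oopm} is invoked in form degree $0$, where it carries the extra summand $\P_0\Lambda^0(S^n)=\R\cdot1$. Under the Hodge star this constant becomes $\R\cdot\vol_{S^n}$, since $\vol_{S^n}=*_{S^n}1$ and hence $*_{S^n}\vol_{S^n}=*_{S^n}*_{S^n}1=1$; this produces the stated formula with the $\R\cdot\vol_{S^n}$ correction. Finally, for the ``moreover'' claim I would observe that $\vol_{S^n}$ is the restriction to $S^n$ of $*_{\R^{n+1}}\nu$, whose coefficients are linear in $u$, so $\vol_{S^n}\in\P_1\Lambda^n(S^n)$. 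Consequently, whenever $s-n-1\ge1$, that is, $s\ge n+2$, the correction term $\R\cdot\vol_{S^n}$ is already contained in $\P_{s-n-1}\Lambda^n(S^n)$, and equation \eqref{eq:extendoop} holds unchanged even in the exceptional degree $k=n$.

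Apart from the conceptual choice of routing through Hodge duality rather than extension, the argument is bookkeeping of polynomial and form degrees once the identities of Lemma \ref{lemma:oopmhodgesphere} and Propositions \ref{prop:oopm} and \ref{prop:pmhodgesphere} are in hand. The main obstacle is therefore not computational but structural: one must recognize that the un-reduced spaces resist a direct extension proof and that Hodge duality against the $\ooP^-$ result is the tool that makes divisibility by $u_N$ transfer cleanly, while carefully tracking the lone constant/volume-form summand that appears in complementary degree $0$.
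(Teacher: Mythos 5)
Your proposal is correct and follows essentially the same route as the paper: invoke Proposition \ref{prop:oopm} in complementary form degree $n-k$ and polynomial degree $s+1$, rewrite both sides as images under $*_{S^n}$ via Lemma \ref{lemma:oopmhodgesphere} and Proposition \ref{prop:pmhodgesphere}, apply $*_{S^n}$ again using $*_{S^n}*_{S^n}=(-1)^{k(n-k)}$, and track the $\R\cdot 1\mapsto\R\cdot\vol_{S^n}$ summand in the $k=n$ case. The handling of the ``moreover'' clause via $\vol_{S^n}=*_{S^n}1$ restricting from $*_{\R^{n+1}}\nu\in\P_1\Lambda^n(\R^{n+1})$ also matches the paper exactly.
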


\begin{corollary}\label{cor:oopeven}
  Assume that $k<n$ or $s\ge n+2$. Then
  \begin{equation*}
    \ooP_s\Lambda^k_e(S^n)=u_N\P_{s-n-1}\Lambda^k_o(S^n).
  \end{equation*}
  \begin{proof}
    We use Propositions \ref{prop:restrictevenodd} and \ref{prop:unevenodd} to take the even part of equation \eqref{eq:extendoop}.
  \end{proof}
\end{corollary}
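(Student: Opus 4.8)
The plan is to obtain the corollary by applying the even-part operator $\alpha\mapsto\alpha_e$ to both sides of equation \eqref{eq:extendoop} of Proposition \ref{prop:extendoop}, which under the stated hypothesis reads $\ooP_s\Lambda^k(S^n)=u_N\P_{s-n-1}\Lambda^k(S^n)$. All of the genuine work --- dividing forms that vanish on $\Gamma$ by $u_N$ and controlling the exceptional cases --- has already been packaged into that proposition, so what remains is only to track how taking even parts acts on each side.

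For the left-hand side, I would first observe that $\ooP_s\Lambda^k(S^n)$ is invariant under every coordinate reflection $R_i^*$: reflection preserves polynomial degree, and since $R_i$ maps each hyperplane $\Gamma_j$ to itself, it preserves the vanishing condition on $S^n\cap\Gamma$. By formula \eqref{eq:even}, $\alpha_e$ is an average of such reflections of $\alpha$, so the even-part operator carries $\ooP_s\Lambda^k(S^n)$ onto its even subspace $\ooP_s\Lambda^k_e(S^n)$, with Proposition \ref{prop:restrictevenodd} ensuring compatibility with restriction from $\R^{n+1}$. For the right-hand side, the key input is Proposition \ref{prop:unevenodd}, which says that multiplication by $u_N$ interchanges even and odd parts: $(u_N\beta)_e=u_N\beta_o$. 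Hence the even part of $u_N\P_{s-n-1}\Lambda^k(S^n)$ equals $u_N\bigl(\P_{s-n-1}\Lambda^k(S^n)\bigr)_o=u_N\P_{s-n-1}\Lambda^k_o(S^n)$, and equating the two even parts yields the claim.

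The one point that genuinely requires the hypothesis $k<n$ or $s\ge n+2$ is ensuring that we may use the clean form \eqref{eq:extendoop} rather than the exceptional $k=n$ formula of Proposition \ref{prop:extendoop} involving the extra summand $\R\cdot\vol_{S^n}$. This matters here in a way it did not for Corollary \ref{cor:oopmeven}: the exceptional summand there was the space of constants, which are even and therefore vanish upon passing to odd parts, whereas $\vol_{S^n}=*_{S^n}1$ is \emph{odd} (since $*_{S^n}$ interchanges parity by Proposition \ref{prop:starsphereevenodd}), so it would survive in the even part and spoil the equality. I therefore do not expect any serious obstacle in the corollary itself; the only care needed is to confirm that the parity hypothesis is exactly what excludes the odd volume-form summand.
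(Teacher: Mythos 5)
Your proposal is correct and follows essentially the same route as the paper: take even parts of equation \eqref{eq:extendoop} using Propositions \ref{prop:restrictevenodd} and \ref{prop:unevenodd}, with the hypothesis $k<n$ or $s\ge n+2$ serving exactly to exclude the exceptional $\R\cdot\vol_{S^n}$ summand. Your added observation that this summand is odd (so, unlike the constants in Corollary \ref{cor:oopmeven}, it would survive the parity projection) is a correct and worthwhile elaboration of why the hypothesis cannot be dropped here.
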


\subsection{Additional remarks}
The above results suffice for proving our theorems, but for completeness we present the extension result for $\ooP_s\Lambda^k(S^n)$, analogous to the result we proved for $\ooP_s^-\Lambda^k(S^n)$ in Proposition \ref{prop:extendalpha}.

\begin{corollary}\label{cor:oop}
  Assume that $k<n$ or $s\ge n+2$. Then any form in $\ooP_s\Lambda^k(S^n)$ can be extended to a form in $\ooP_s\Lambda^k(\R^{n+1})$.
  \begin{proof}
    Proposition \ref{prop:extendoop} gives us that $\ooP_s\Lambda^k(S^n)=u_N\P_{s-n-1}\Lambda^k(S^n)$. By definition, any form in $u_N\P_{s-n-1}\Lambda^k(S^n)$ can be extended to a form in $u_N\P_{s-n-1}\Lambda^k(\R^{n+1})$, and this space is equal to $\ooP_s\Lambda^k(\R^{n+1})$ by Proposition \ref{prop:oopr}.
  \end{proof}
\end{corollary}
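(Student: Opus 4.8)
The plan is to reduce the extension claim entirely to the divisibility characterizations already established, so that essentially no new work is required. The delicate point, as the discussion preceding Proposition \ref{prop:extendoop} emphasizes, is that an element of $\ooP_s\Lambda^k(S^n)$ need not extend to $\ooP_s\Lambda^k(\R^{n+1})$ in general---the form $v\,dv$ on $S^1$ is the cautionary example. The hypothesis $k<n$ or $s\ge n+2$ is precisely what rules out this exceptional behavior, and I would exploit it only through the clean conclusion of Proposition \ref{prop:extendoop}, letting that result absorb all the genuine difficulty.

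First I would invoke Proposition \ref{prop:extendoop}, which under exactly these hypotheses gives $\ooP_s\Lambda^k(S^n)=u_N\P_{s-n-1}\Lambda^k(S^n)$. Thus an arbitrary $\alpha\in\ooP_s\Lambda^k(S^n)$ can be written as $\alpha=u_N\beta$ with $\beta\in\P_{s-n-1}\Lambda^k(S^n)$. By the definition of $\P_{s-n-1}\Lambda^k(S^n)$ as a space of restrictions, $\beta$ admits an extension $\hat\beta\in\P_{s-n-1}\Lambda^k(\R^{n+1})$. I would then simply set $\hat\alpha=u_N\hat\beta$. Since $u_N$ is a globally defined polynomial and $\hat\beta$ restricts to $\beta$, the form $\hat\alpha$ restricts to $u_N\beta=\alpha$ on $S^n$, so $\hat\alpha$ is an extension of $\alpha$.

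It then remains only to check that $\hat\alpha\in\ooP_s\Lambda^k(\R^{n+1})$, which is immediate: $\hat\alpha=u_N\hat\beta$ lies in $u_N\P_{s-n-1}\Lambda^k(\R^{n+1})$, and Proposition \ref{prop:oopr} identifies this space with $\ooP_s\Lambda^k(\R^{n+1})$. I do not expect any genuine obstacle at this stage; all the difficulty has been front-loaded into Proposition \ref{prop:extendoop}, whose proof routes through Proposition \ref{prop:extendalpha} and the Hodge-duality argument of Lemma \ref{lemma:oopmhodgesphere}. The one thing to stay alert to is the hypothesis itself: without $k<n$ or $s\ge n+2$, Proposition \ref{prop:extendoop} produces the extra $\R\cdot\vol_{S^n}$ summand in the $k=n$ case, and the clean divisibility by $u_N$---and hence the existence of the extension---can genuinely fail, as the $v\,dv$ example shows.
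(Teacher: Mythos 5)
Your proposal is correct and follows exactly the paper's own argument: apply Proposition \ref{prop:extendoop} to write $\alpha=u_N\beta$, extend $\beta$ using the definition of $\P_{s-n-1}\Lambda^k(S^n)$ as a space of restrictions, and identify $u_N\P_{s-n-1}\Lambda^k(\R^{n+1})$ with $\ooP_s\Lambda^k(\R^{n+1})$ via Proposition \ref{prop:oopr}. You merely spell out the intermediate step of choosing $\hat\beta$ and setting $\hat\alpha=u_N\hat\beta$, which the paper leaves implicit.
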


The conditions on $k$ and $s$ in Corollary \ref{cor:oop} are necessary, as we see from the following example.

\begin{example}\label{eg:noextend}
   Let $\alpha=u_N\vol_{S^n}$. As discussed above, $\vol_{S^n}$ can be extended to $*_{\R^{n+1}}\nu\in\P_1\Lambda^n(\R^{n+1})$, so $\alpha$ has an extension that is in $\ooP_{n+2}\Lambda^n(\R^{n+1})$. One can show that the degree $n+2$ is optimal due to the fact that any form in $\ooP_s\Lambda^n(\R^{n+1})$ must be divisible by $u_N$ and the fact that $\alpha$ is even. However, $\alpha$ itself is actually of lower degree. Indeed, $\alpha$ can be extended to $\hat\alpha=u_2\dotsm u_{n+1}\,du_2\wedge\dotsb\wedge du_{n+1}$, putting $\alpha$ in $\ooP_n\Lambda^n(S^n)$. We can check that $\hat\alpha$ does in fact restrict to $\alpha$ by computing that $\nu\wedge\hat\alpha=u_N\vol_{\R^{n+1}}$, and hence $*_{S^n}\hat\alpha=u_N$. See also Examples \ref{eg:vdv} and \ref{eg:scalarfields0}.
\end{example}

This example also explains the seemingly paradoxical claim that $\ooP_n\Lambda^n(S^n)=u_N(\R\cdot\vol_{S^n})$ given by Proposition \ref{prop:extendoop} when $k=n$ and $s=n$: we have a differential form of degree $n$ that is divisible by $u_N$, even though $u_N$ has degree $n+1$. See also Example \ref{eg:prsn}.

\section{Conclusion} The transformation $x_i=u_i^2$ induces a correspondence between differential forms on the simplex $x_1+\dotsb x_{n+1}=1$ and differential forms the sphere $u_1^2+\dotsb+u_{n+1}^2=1$. We completely characterized the spaces of differential forms on the sphere corresponding to the $\P_r\Lambda^k(T^n)$, $\P_r^-\Lambda^k(T^n)$, $\oP_r\Lambda^k(T^n)$, and $\oP_r^-\Lambda^k(T^n)$ families. This correspondence gives an explanation for the isomorphisms $\P_r\Lambda^k(T^n)\cong\oP_{r+k+1}^-\Lambda^{n-k}(T^n)$ and $\P_r^-\Lambda^k(T^n)\cong\oP_{r+k}\Lambda^{n-k}(T^n)$ used by Arnold, Falk, and Winther in their development of finite element exterior calculus: after the change of coordinates, both of these isomorphisms reveal themselves to be the Hodge star $*_{S^n}$ followed by multiplication by the bubble function $u_1\dotsm u_{n+1}$. Our result thus gives new isomorphism maps $\P_r\Lambda^k(T^n)\to\oP_{r+k+1}^-\Lambda^{n-k}(T^n)$ and $\P_r^-\Lambda^k(T^n)\to\oP_{r+k}\Lambda^{n-k}(T^n)$ that are defined pointwise. As seen in our examples, evaluating these maps is a quick computation that does not require expressing the differential forms in terms of a basis for $\P_r\Lambda^k(T^n)$ or $\P_r^-\Lambda^k(T^n)$.

\subsection{Future work}
One advantage of working with even differential forms on $S^n$ is that $S^n$ is a smooth manifold without boundary, in contrast to $T^n$, which has a boundary that is not smooth. Potentially, this feature could be used to simplify the functional analysis arguments used in the development of finite element exterior calculus, such as Poincar\'e's inequality discussed in \cite{afw06} and the bounded cochain projections discussed in \cite{afw06} and \cite{fw14}.

Another direction in which one could continue this work would be to consider parallelotope meshes instead of simplicial meshes. Finite element exterior calculus for parallelotope meshes \cite{aa11,abb15} works in much the same way as for simplicial meshes. Following the ideas in our current work, one could apply a coordinate transformation to convert a differential form on an $n$-dimensional parallelotope into an even differential form on an $n$-torus. Could such a coordinate transformation result in a better understanding of finite element spaces such as the serendipity elements?

\section{Acknowledgments}
I would like to thank Ari Stern, Douglas Arnold, Evan Gawlik, and the anonymous referees for their feedback on this project. I would also like to acknowledge the support of the AMS--Simons Travel Grant.

\bibliographystyle{siam}
\bibliography{fem}

\appendix
\section{Equivalence of definitions of \texorpdfstring{$\mathcal P_r^-\Lambda^k(T^n)$}{Pr-\textLambda k(Tn)}}\label{sec:prmdef}
The definition of $\mathcal P_r^-\Lambda^k(T^n)$ that we gave in Definition \ref{def:pm} is different from the definition given by Arnold, Walk, and Winther in \cite{afw06}. In this appendix, we show that the two definitions are equivalent.

The definition in \cite[Subsection 3.2]{afw06} involves choosing an arbitrary point $z$ in the simplex $T^n$ and defining a vector field on $T^n$ such that the vector based at $x\in T^n$ points away from $z$ with magnitude $\abs{x-z}$. Setting $z$ to be the center of the simplex $\left(\frac1{n+1},\dotsc,\frac1{n+1}\right)$, one can check that this vector field is given by the formula
\begin{equation*}
  V_\kappa=\sum_{i=1}^{n+1}\left(x_i-\tfrac1{n+1}\right)\pp{x_i}
\end{equation*}
for $x\in T^n$. Using this vector field, we can define $\P_r^-\Lambda^k(T^n)$.

\begin{definition}\label{def:pmafw}
  For $r\ge1$, Arnold, Falk, and Winther \cite{afw06} define $\P_r^-\Lambda^k(T^n)$ to be
  \begin{equation*}
    \P_{r-1}\Lambda^k(T^n)+\kappa\P_{r-1}\Lambda^{k+1}(T^n),
  \end{equation*}
  where $\kappa$ denotes the interior product with the vector field $V_\kappa$.
\end{definition}

\begin{proposition}\label{prop:prmdfn}
  The definitions of $\P_r^-\Lambda^k(T^n)$ given by Definitions \ref{def:pm} and \ref{def:pmafw} are equivalent.

\begin{proof}
  Let $t=x_1+\dotsb+x_{n+1}$. With the equation
  \begin{equation*}
    V_\kappa:=\sum_{i=1}^{n+1}\left(x_i-\tfrac t{n+1}\right)\pp{x_i}=X-\tfrac t{n+1}\nabla t,
  \end{equation*}
  we can extend the definition of $V_\kappa$ to all of $\R^{n+1}$. Geometrically, $V_\kappa$ is the projection of the radial vector field $X$ to the simplicies $t=\text{const}$. We can extend the definition of $\kappa$ to $\kappa\colon\Lambda^{k+1}(\R^{n+1})\to\Lambda^k(\R^{n+1})$ to be the interior product with the vector field $V_\kappa$, and so we have
  \begin{equation*}
    \kappa=i_X-\tfrac t{n+1}i_{\nabla t}.
  \end{equation*}

  Assume now that $a$ satisfies Definition \ref{def:pm}, so $a$ can be extended to a differential form $\hat a=i_X\hat b$, where $\hat b\in\P_{r-1}\Lambda^{k+1}(\R^{n+1})$. Then
  \begin{equation*}
    \hat a=\kappa\hat b+\tfrac t{n+1}i_{\nabla t}\hat b
  \end{equation*}
  Thus $a=\kappa b+c$, where $b$ and $c$ are the restrictions of $\hat b$ and $\tfrac t{n+1}i_{\nabla t}\hat b$ to $T^n$, respectively. By definition, $b\in\P_{r-1}\Lambda^{k+1}(T^n)$. Meanwhile, since $t=1$ on $T^n$, we know that $c$ is also the restriction to $T^n$ of $\tfrac1{n+1}i_{\nabla t}\hat b$, which is in $\P_{r-1}\Lambda^k(\R^{n+1})$ because $\nabla t$ has polynomial degree zero. Thus $c\in\P_{r-1}\Lambda^k(T^n)$. We conclude that $a\in\kappa\P_{r-1}\Lambda^{k+1}(T^n)+\P_{r-1}\Lambda^k(T^n)$, so $a$ satisfies Definition \ref{def:pmafw}.

  Conversely, assume that $a$ satisfies Definition \ref{def:pmafw}. We consider the two summands as separate cases. 

  If $a\in\P_{r-1}\Lambda^k(T^n)$, then let $\hat a'\in\P_{r-1}\Lambda^k(\R^{n+1})$ be an arbitrary extension of $a$, and let
  \begin{align}
    \hat a&:=i_X(dt\wedge\hat a')\label{eq:projecta1}\\
          &\phantom:=i_X(dt)\hat a'-dt\wedge i_X\hat a'\\
          &\phantom:=t\hat a'-dt\wedge i_X\hat a'.\label{eq:projecta2}
  \end{align}
  Using equation \eqref{eq:projecta2}, since the restriction of $t$ to $T^n$ is $1$ and the restriction of $dt$ to $T^n$ is zero, the restriction of $\hat a$ to $T^n$ is the same as the restriction of $\hat a'$, namely $a$. Meanwhile, using equation \eqref{eq:projecta1}, we have that $\hat a\in i_X\P_{r-1}\Lambda^{k+1}(\R^{n+1})$ because $dt\in\P_0\Lambda^1(\R^{n+1})$, so $dt\wedge\hat a'\in\P_{r-1}\Lambda^{k+1}(\R^{n+1})$. Thus $a$ satisfies Definition \ref{def:pm}.

  If $a=\kappa b$ for some $b\in\P_{r-1}\Lambda^{k+1}(T^n)$, then let $\hat b'\in\P_{r-1}\Lambda^{k+1}(\R^{n+1})$ be an arbitrary extension of $b$. Set
  \begin{align}
    \hat b&:=\tfrac1{n+1}i_{\nabla t}(dt\wedge\hat b')\label{eq:projectb1}\\
          &\phantom:=\tfrac1{n+1}i_{\nabla t}(dt)\hat b'-\tfrac1{n+1}dt\wedge i_{\nabla t}\hat b'\\
          &\phantom:=\hat b'-\tfrac1{n+1}dt\wedge i_{\nabla t}\hat b'.\label{eq:projectb2}
  \end{align}
  As before, observe from equation \eqref{eq:projectb2} that the restriction of $\hat b$ to $T^n$ is the same as the restriction of $\hat b'$, namely $b$. Because $V_\kappa$ is tangent to $T^n$, we can then conclude that the restriction of $\kappa\hat b$ to $T^n$ is $\kappa b=a$. Next, observe from equation \eqref{eq:projectb1} that $i_{\nabla t}\hat b=0$. Consequently, we can set
  \begin{equation*}
    \hat a:=i_X\hat b=\kappa\hat b+\tfrac t{n+1}i_{\nabla t}\hat b=\kappa\hat b,
  \end{equation*}
  from which we see that $\hat a$ is an extension of $a$. Finally, observe that $\hat b\in\P_{r-1}\Lambda^{k+1}(\R^{n+1})$ because $dt$ and $\nabla t$ both have polynomial degree zero. Thus $a$ satisfies Definition \ref{def:pm}, as desired.  
\end{proof}
\end{proposition}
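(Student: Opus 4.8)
The plan is to pass to extensions on all of $\R^{n+1}$ and reduce the equivalence to a single operator identity relating the $i_X$ of Definition \ref{def:pm} to the $\kappa$ of Definition \ref{def:pmafw}. Writing $t=x_1+\dotsb+x_{n+1}$ and $\nabla t=\sum_i\pp{x_i}$, I would first extend $V_\kappa$ to $\R^{n+1}$ and record the decomposition $X=V_\kappa+\frac{t}{n+1}\nabla t$, which at the level of interior products reads $i_X=\kappa+\frac{t}{n+1}i_{\nabla t}$. The whole argument then runs on the two facts that $t=1$ and $dt=0$ when restricted to $T^n$: the first kills the scalar factor $\frac{t}{n+1}$, and the second lets me freely add multiples of $dt$ to an extension without changing its restriction.

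First I would prove that Definition \ref{def:pm} implies Definition \ref{def:pmafw}. If $a$ extends to $\hat a=i_X\hat b$ with $\hat b\in\P_{r-1}\Lambda^{k+1}(\R^{n+1})$, the identity above gives $\hat a=\kappa\hat b+\frac{t}{n+1}i_{\nabla t}\hat b$. Restricting to $T^n$ and using $t=1$, the first term becomes $\kappa b$ with $b=\hat b|_{T^n}\in\P_{r-1}\Lambda^{k+1}(T^n)$, while the second restricts into $\P_{r-1}\Lambda^k(T^n)$ because $i_{\nabla t}\hat b$ still has polynomial degree $r-1$ (the field $\nabla t$ being constant). Hence $a$ lies in the sum defining Definition \ref{def:pmafw}.

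The reverse inclusion is where the real work lies, and I would split it along the two summands of Definition \ref{def:pmafw}. For $a\in\P_{r-1}\Lambda^k(T^n)$, the trick is to wedge with $dt$ before applying $i_X$: taking any extension $\hat a'$ and setting $\hat b=dt\wedge\hat a'$, the Leibniz rule yields $i_X\hat b=t\hat a'-dt\wedge i_X\hat a'$, which restricts to $a$ since $t=1$ and $dt=0$ on $T^n$, exhibiting $a$ as required. For $a=\kappa b$, the subtlety is that a naive extension $\hat b'$ satisfies $i_X\hat b'=\kappa\hat b'+\frac{t}{n+1}i_{\nabla t}\hat b'$, whose normal term need not vanish. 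The remedy is to replace $\hat b'$ by $\hat b=\frac{1}{n+1}i_{\nabla t}(dt\wedge\hat b')$; using $i_{\nabla t}(dt)=n+1$ one checks that $\hat b$ still restricts to $b$, while $i_{\nabla t}\circ i_{\nabla t}=0$ forces $i_{\nabla t}\hat b=0$, so that $i_X\hat b=\kappa\hat b$ restricts exactly to $\kappa b=a$.

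The hard part will be precisely this last construction: I must engineer an extension of $b$ whose normal component $i_{\nabla t}\hat b$ vanishes identically, so that $i_X$ and $\kappa$ genuinely agree on it after restriction. The device of contracting $dt\wedge\hat b'$ against $\nabla t$ is what annihilates that component, and the only remaining checks are that the resulting $\hat b$ still sits in $\P_{r-1}\Lambda^{k+1}(\R^{n+1})$ and restricts correctly. Everything else reduces to bookkeeping with polynomial degrees and the vanishing of $t-1$ and $dt$ on the simplex.
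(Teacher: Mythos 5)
Your proposal is correct and follows essentially the same route as the paper: the same extension of $V_\kappa$ and the identity $i_X=\kappa+\tfrac{t}{n+1}i_{\nabla t}$, the same restriction argument for the forward direction, and the same two devices $\hat a:=i_X(dt\wedge\hat a')$ and $\hat b:=\tfrac1{n+1}i_{\nabla t}(dt\wedge\hat b')$ for the two summands in the converse. The only point worth making explicit is that $V_\kappa$ is tangent to $T^n$, which is what lets you conclude that $\kappa\hat b$ restricts to $\kappa b$.
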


\section{Vector space identities}
In this appendix, we provide proofs of two vector space identities involving the Hodge star. I believe that these identities are well-known, but I have not been able to find published references for them.

Let $V$ be a vector space equipped with an inner product and an orientation. This structure defines an inner product $\langle\cdot,\cdot\rangle$ on the exterior algebra $\bigwedge^*V^*$ and a Hodge star map $*\colon\bigwedge^*V^*\to\bigwedge^*V^*$.

\subsection{The interior product, the exterior product, and the Hodge star} Let $X\in V$, and let $\nu\in V^*$ be the dual vector corresponding to $X$ with respect to the inner product. It is a standard result that the adjoint of $i_X$ is $\nu\wedge$ in the sense that
\begin{equation*}
  \langle i_X\hat\alpha,\hat\beta\rangle=\langle\hat\alpha,\nu\wedge\hat\beta\rangle,
\end{equation*}
where $\hat\alpha\in\bigwedge^kV^*$ and $\hat\beta\in\bigwedge^{k-1}V^*$.

This adjoint relationship has the following consequence for the Hodge star on $\bigwedge^*V^*$.

\begin{proposition}\label{prop:interiorhodge}
    Let $V$ be an oriented inner product space. If $X\in V$ and $\nu\in V^*$ are dual to one another with respect to the inner product, then
  \begin{equation*}
    i_X(*\hat\alpha)=*(\hat\alpha\wedge\nu).
  \end{equation*}
  for all $\hat\alpha\in\bigwedge^kV^*$. 
  \begin{proof}
    Let $\dim V=n+1$. For all $\hat\beta\in\bigwedge^{n-k}V^*$, we have
    \begin{multline*}
      \langle i_X(*\hat\alpha),\hat\beta\rangle\vol=\langle *\hat\alpha,\nu\wedge\hat\beta\rangle\vol=\langle\hat\alpha,*^{-1}(\nu\wedge\hat\beta)\rangle\vol\\
      =\hat\alpha\wedge\nu\wedge\hat\beta
      =\langle\hat\alpha\wedge\nu,*^{-1}\hat\beta\rangle\vol=\langle*(\hat\alpha\wedge\nu),\hat\beta\rangle\vol.\qedhere
    \end{multline*}
  \end{proof}
\end{proposition}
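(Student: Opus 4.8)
The plan is to prove equality of the two $(n-k)$-forms $i_X(*\hat\alpha)$ and $*(\hat\alpha\wedge\nu)$ by showing they have the same inner product against every $\hat\beta\in\bigwedge^{n-k}V^*$; since the inner product on $\bigwedge^{n-k}V^*$ is nondegenerate, this suffices. A quick degree count confirms both sides live in $\bigwedge^{n-k}V^*$: applying $i_X$ drops the degree $n+1-k$ of $*\hat\alpha$ by one, while $*$ sends the $(k+1)$-form $\hat\alpha\wedge\nu$ to degree $n-k$. The ingredients I would use are the adjoint relationship $\langle i_X\hat\alpha,\hat\beta\rangle=\langle\hat\alpha,\nu\wedge\hat\beta\rangle$ recalled just above, together with the standard facts that the Hodge star is an isometry, so that $\langle *\mu,\rho\rangle=\langle\mu,*^{-1}\rho\rangle$, and that it implements the inner product via $\mu\wedge *\rho=\langle\mu,\rho\rangle\vol$.

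First I would expand the left-hand pairing. Applying the adjoint relationship gives $\langle i_X(*\hat\alpha),\hat\beta\rangle=\langle *\hat\alpha,\nu\wedge\hat\beta\rangle$. Sliding the star across by the isometry property yields $\langle\hat\alpha,*^{-1}(\nu\wedge\hat\beta)\rangle$, and then multiplying by $\vol$ and using $\mu\wedge *\rho=\langle\mu,\rho\rangle\vol$ with $\rho=*^{-1}(\nu\wedge\hat\beta)$ collapses this to the plain exterior product $\hat\alpha\wedge\nu\wedge\hat\beta$, since $*\bigl(*^{-1}(\nu\wedge\hat\beta)\bigr)=\nu\wedge\hat\beta$.

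Then I would read the same chain backwards starting from $\hat\alpha\wedge\nu\wedge\hat\beta$: writing it as $(\hat\alpha\wedge\nu)\wedge *(*^{-1}\hat\beta)=\langle\hat\alpha\wedge\nu,*^{-1}\hat\beta\rangle\vol$ and then transferring the star back gives $\langle *(\hat\alpha\wedge\nu),\hat\beta\rangle\vol$. Comparing the two computations, $\langle i_X(*\hat\alpha),\hat\beta\rangle=\langle *(\hat\alpha\wedge\nu),\hat\beta\rangle$ for all $\hat\beta$, so the two forms coincide. I do not expect a serious obstacle here; the argument is a bookkeeping exercise in the defining properties of $*$. The only points demanding care are the degree check that makes pairing against $\hat\beta\in\bigwedge^{n-k}V^*$ the correct test, and consistently using the isometry of $*$ in the right direction, namely that its adjoint is $*^{-1}$ rather than $*$, when moving the star between the two slots of the inner product.
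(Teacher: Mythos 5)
Your proposal is correct and follows essentially the same route as the paper's proof: pair both sides against an arbitrary $\hat\beta\in\bigwedge^{n-k}V^*$, use the adjointness of $i_X$ and $\nu\wedge$, slide the Hodge star across the inner product via its isometry property, and meet in the middle at the wedge product $\hat\alpha\wedge\nu\wedge\hat\beta$. The degree check and the remark about nondegeneracy of the induced inner product are sensible explicit additions, but the argument is the same chain of equalities.
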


\subsection{The Hodge star on a hyperplane}
In this subsection, we consider a hyperplane $H$ that is orthogonal to a unit covector $\nu\in V^*$. The inner product on $V$ induces an inner product on $H$, and the orientation on $V$ along with the choice of unit conormal $\nu$ induces an orientation on $H$. Thus, in addition to a Hodge star operator $\bigwedge^*V^*\to\bigwedge^*V^*$, we also have a Hodge star operator $\bigwedge^*H^*\to\bigwedge^*H^*$. We denote these by $*_V$ and $*_H$, respectively.

We show that $*_H$ can be computed from $*_V$ and $\nu$ as follows.

\begin{proposition}\label{prop:hodgehyperplane}
  With notation as above, let $\alpha\in\bigwedge^kH^*$ be the restriction to $H$ of $\hat\alpha\in\bigwedge^kV^*$. Then $*_H\alpha$ is the restriction to $H$ of
  \begin{equation*}
    *_V(\nu\wedge\hat\alpha).
  \end{equation*}
  \begin{proof}
    Let $n=\dim H$, let $\hat\beta\in\bigwedge^{n-k}V^*$ denote $*_V(\nu\wedge\hat\alpha)$, and let $\beta\in\bigwedge^{n-k}H^*$ be the restriction of $\hat\beta$ to $H$. We prove that $*_H\alpha=\beta$ by verifying it on a basis for $\hat\alpha$. 

    Let $\nu,e_1,\dotsc,e_n$ be an oriented orthonormal basis for $V^*$, so $e_1,\dotsc,e_n$ is an oriented orthonormal basis for $H^*$, and we have that $\vol_H=e_1\wedge\dotsb e_n$ and $\vol_V=\nu\wedge\vol_H$. For $I=\{i_1<i_2<\dotsb<i_k\}\subseteq\{1,\dotsc,n\}$, let $e_I$ denote $e_{i_1}\wedge\dotsb\wedge e_{i_k}$.

    If $\hat\alpha=\nu\wedge e_I$, then $\alpha=0$ because the restriction of $\nu$ to $H$ is zero. Also, $\nu\wedge\hat\alpha=0$, so $\hat\beta$ and $\beta$ are zero as well, as desired.

    Now let $\hat\alpha=e_I$. By the definition of $*_V$ with respect to the oriented basis $\nu,e_1,\dotsc,e_n$, we see that $\hat\beta=*_V(\nu\wedge e_I)$ has the form $\pm e_J$, where $J=\{1,\dotsc,n\}\setminus I$ and the sign is chosen so that $(\nu\wedge e_I)\wedge(\pm e_J)=\vol_V$. Since $\nu\wedge(e_I\wedge\pm e_J)=\vol_V$, we conclude that $e_I\wedge\pm e_J=\vol_H$. Thus, by the definition of $*_H$ with respect to the oriented basis $e_1,\dotsc,e_n$, we have $*_He_I=\pm e_J$, that is, $*_H\alpha=\beta$.
  \end{proof}
\end{proposition}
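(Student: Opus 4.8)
The plan is to verify the identity on an orthonormal basis adapted to the orthogonal splitting $V^* \cong \mathbb{R}\nu \oplus H^*$, exploiting the fact that both sides of the claimed equation are linear in $\hat\alpha$. First I would extend $\nu$ to an oriented orthonormal basis $\nu, e_1, \dots, e_n$ of $V^*$, where $n = \dim H$; by the very definition of the induced metric and orientation on $H$, the restrictions of $e_1, \dots, e_n$ form an oriented orthonormal basis of $H^*$, so that $\vol_H = e_1 \wedge \dots \wedge e_n$ and $\vol_V = \nu \wedge \vol_H$. A basis for $\bigwedge^k V^*$ is then given by the wedge products $e_I$ with $|I| = k$ together with the products $\nu \wedge e_I$ with $|I| = k-1$, and by linearity it suffices to check the identity on each of these.

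For a basis element of the form $\hat\alpha = \nu \wedge e_I$, both sides vanish for trivial reasons: the restriction $\alpha$ is zero because $\nu$ restricts to zero on $H$, so $*_H\alpha = 0$, while on the right $\nu \wedge \hat\alpha = \nu \wedge \nu \wedge e_I = 0$, so its restriction is zero as well. For a basis element $\hat\alpha = e_I$ with $I \subseteq \{1, \dots, n\}$ and $|I| = k$, the restriction $\alpha$ is again $e_I$, now read as an element of $H^*$. I would then compute both Hodge stars directly from their definitions on oriented orthonormal bases: $*_H e_I = \pm e_J$ where $J = \{1, \dots, n\} \setminus I$ and the sign is fixed by the requirement $e_I \wedge (\pm e_J) = \vol_H$, whereas $*_V(\nu \wedge e_I) = \pm e_J$ with the sign fixed by $(\nu \wedge e_I) \wedge (\pm e_J) = \vol_V$.

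The crux is showing that these two signs coincide, and this is precisely where the factorization $\vol_V = \nu \wedge \vol_H$ does the work. Rewriting the second sign condition as $\nu \wedge \bigl(e_I \wedge (\pm e_J)\bigr) = \nu \wedge \vol_H$ and cancelling the common leading factor of $\nu$ reduces it to $e_I \wedge (\pm e_J) = \vol_H$, which is exactly the condition that determines the sign in $*_H e_I$. Hence the two computations produce the same complementary monomial $\pm e_J$, so the restriction of $*_V(\nu \wedge \hat\alpha)$ agrees with $*_H \alpha$ on this basis element, and linearity completes the argument. I expect the only delicate point to be the orientation bookkeeping, namely confirming that the induced orientation on $H$ is the one for which $\nu \wedge \vol_H = \vol_V$; once that convention is pinned down the sign matching becomes automatic, and no genuine computation beyond the cancellation of $\nu$ is required.
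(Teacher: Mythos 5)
Your proposal is correct and follows essentially the same route as the paper's proof: verify the identity on the adapted oriented orthonormal basis $\nu,e_1,\dotsc,e_n$, dispose of the $\nu\wedge e_I$ terms trivially, and match the signs of the two Hodge stars on $e_I$ by cancelling the leading $\nu$ from $\vol_V=\nu\wedge\vol_H$. No gaps.
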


\end{document}